\numberwithin{equation}{section}
\newcommand{\tens}[1]{%
  \mathbin{\mathop{\otimes}\displaylimits_{#1}}%
}
\theoremstyle{plain}
\newtheorem{Th}{Theorem}[section]
\newtheorem{Cor}[Th]{Corollary}
\newtheorem{Prop}[Th]{Proposition}
\newtheorem{Lemma}[Th]{Lemma}
 \theoremstyle{definition}
\newtheorem{Def}[Th]{Definition}
\begin{document}
\title{Lech's Inequality for the Buchsbaum-Rim Multiplicity and Mixed Multiplicity}

\author{Vinh Nguyen and Kelsey Walters}

\address{Vinh Nguyen \\ Department of Mathematics \\ Purdue University \\
West Lafayette \\ IN 47907 \\ USA} 
\email{nguye229@purdue.edu}

\address{Kelsey Walters \\ Department of Mathematics \\ Purdue University \\
West Lafayette \\ IN 47907 \\ USA} 
\email{kjlwalters@purdue.edu}



\begin{abstract} We generalize an improved Lech bound, due to Huneke, Smirnov, and Validashti, from the Hilbert-Samuel multiplicity to the Buchsbaum-Rim multiplicity and mixed multiplicity. We reduce the problem to the graded case and then to the polynomial ring case. There we use complete reductions, studied by Rees, to prove sharper bounds for the mixed multiplicity in low dimensions before proving the general case.
\end{abstract}
\maketitle
\section{Introduction} For a Noetherian local ring $(R,m)$ of dimension $d$ and an $m$-primary ideal $I$, Lech \cite[{Theorem }3]{LechIneq} proved the following bound on the Hilbert-Samuel multiplicity, $e(I) \leq d!\lambda(R/I)e(R)$, where $e(I)$ denotes the Hilbert-Samuel multiplicity of $I$, $e(R)$ denotes the Hilbert-Samuel multiplicity of $m$, and $\lambda(R/I)$ denotes the length of $R/I$. Recently, Huneke, Smirnov, and Validashti \cite[{Theorem }6.1]{HSV-Lech} improved the bound to $e(mI) \leq d!\lambda(R/I)e(R)$ for $d \geq 4$. 

There has been significant work to generalize the Hilbert-Samuel multiplicity. Buchsbaum and Rim generalized the Hilbert-Samuel multiplicity to submodules $E \subseteq F = R^r$  of finite-rank free modules such that $\lambda(F/E) < \infty$. Let $\textrm{Sym}(E)$ denote the symmetric algebra of a finite module $E$. Let $E^n$ denote the $n$-th degree component of the image of the natural map $\textrm{Sym}(E) \to \textrm{Sym}(F)$, and let $F^n$ denote the $n$-th degree component of $\textrm{Sym}(F)$. Buchsbaum and Rim proved that for $n >> 0$ the lengths $\lambda(F^n/E^n)$ is a polynomial function in $n$ \cite[Theorem 3.1]{Buchsbaum-Rim}. The normalized leading coefficient of this polynomial is called the Buchsbaum-Rim multiplicity of $E$, and is denoted $br(E)$. It plays a key role in the theory of equisingularities in complex-analytic geometry, see \cite{gaffney-kleiman}. 

Another way to generalize the Hilbert-Samuel multiplicity is to measure the colengths of multiple $m$-primary ideals $I_1,...,I_r$ . For $n_1,...,n_r>>0$, $\lambda(R/(I_1^{n_1}\cdots I_r^{n_r}))$ is a polynomial of degree $d$ in $r$ variables, $x_1,...,x_r$ \cite[Theorem 17.4.2]{SwansonHuneke}. The resulting polynomial has a family of leading coefficients; normalized they are called the mixed multiplicities of $I_1,...,I_r$. For non-negative integers $a_1,...,a_r$ with $a_1+...+a_r=d$, if we list each ideal $I_i$, $a_i$ number of times as $I_1,...,I_d$, then $a_1!\cdots a_r!$ times the coefficient of $x_1^{a_1}\cdots x_r^{a_r}$ is called the mixed multiplicity of $I_1,...,I_r$ of type $(a_1,...,a_r)$. It is denoted as $e(I_1,...,I_d)$. We will define the Buchsbaum-Rim multiplicity and mixed multiplicities in more detail in section 2.

In this paper we generalize the improved Lech Bound to the Buchsbaum-Rim multiplicity and also to the mixed multiplicity. Since the Buchsbaum-Rim multiplicity generalizes the Hilbert-Samuel multiplicity, our result generalizes that of Huneke, Smirnov, and Validashti \cite[{Theorem }6.1]{HSV-Lech}.

\begin{Th} \label{main} Let $(R,m)$ be a Noetherian local ring with dim $R = d \geq 4$, and $E \subseteq F = R^r$ a submodule of a finite-rank free module with $\lambda(F/E) < \infty$ and $E \subseteq mF$. Then
$$\belowdisplayskip=20pt br(mE) < \frac{(d+r-1)!}{r!}\lambda(F/E)e(R).$$
\end{Th}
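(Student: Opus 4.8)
The plan is to follow the three-stage reduction announced in the abstract and to concentrate the real work in a Lech-type inequality for mixed multiplicities proved via complete reductions. First I would reduce to the graded case. Passing from $(R,m)$ to its associated graded ring $\mathrm{gr}_m(R)$ by the standard Lech-type degeneration, one has $e(R) = e(\mathrm{gr}_m R)$ and $\lambda(F/E)$ is unchanged, while $br(mE)$ is controlled from above under the degeneration; replacing $E$ by its module of initial forms inside $\mathrm{gr}_m(R)^r$ then lets me assume $R$ is a standard graded ring over $k = R/m$ with $m$ its homogeneous maximal ideal, the hypothesis $E \subseteq mF$ being preserved. Next I would reduce to the polynomial ring. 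A homogeneous Noether normalization $A = k[y_1,\ldots,y_d] \hookrightarrow R$ realizes $R$ as a finite $A$-module whose generic rank records $e(R)$; transferring the length and multiplicity computations along this finite extension should reduce the problem to the case $R = A$, where $e(R) = 1$ and the target reads $br(mE) < \tfrac{(d+r-1)!}{r!}\lambda(F/E)$.

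With $R$ a polynomial ring, I would translate the Buchsbaum-Rim multiplicity into mixed multiplicities. Working in $S = \mathrm{Sym}(F) = R[T_1,\ldots,T_r]$, graded by $T$-degree so that $F^n = S_n$ and $E^n = [\mathcal{E}^n]_n$ for the ideal $\mathcal{E} = E\cdot S$ generated by $E \subseteq S_1$, the finiteness of $\lambda(F/E)$ makes $\mathcal{E}$ primary to $\mathfrak{n} = mS + S_+$, and the Buchsbaum-Rim function $\lambda_R(F^n/E^n)$ is governed by the bigraded Hilbert function of $S/\mathcal{E}^n$. This lets me express $br(mE)$ as a sum of mixed multiplicities of $m$ and $(mE)S$; the combinatorial factor $\tfrac{(d+r-1)!}{r!} = \binom{d+r-1}{r}(d-1)!$ then reflects the distribution of the $r$ module directions among the $d$ ring directions, reducing everything to a Lech bound for these mixed multiplicities.

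To bound the mixed multiplicities $e(mI_1,\ldots,mI_d)$ arising from $\mathcal{E}$, I would invoke Rees's theory of complete reductions: after passing to an infinite residue field, a general complete reduction produces a system of parameters $x_1,\ldots,x_d$ with $x_i \in mI_{j(i)}$ and $e(mI_1,\ldots,mI_d) = e\big((x_1,\ldots,x_d)\big)$, turning the estimate into a length computation for an ideal generated by explicit products in the polynomial ring. I would first establish sharper-than-Lech bounds in low dimension, where complete reductions can be written down and the colengths estimated by hand, and then bootstrap to general $d \geq 4$ by cutting with generic superficial elements to induct on $d$, feeding the low-dimensional sharp inputs into the inductive step to produce the strict inequality.

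The main obstacle is the strict inequality together with the exact constant, exactly as in the $r=1$ result of Huneke, Smirnov, and Validashti: the extra factor of $m$ yields a genuine improvement only once $d \geq 4$, and extracting it requires the sharp low-dimensional mixed-multiplicity bounds rather than the naive Lech estimate. Controlling how complete reductions of the twisted ideals $mI_j$ behave under the reduction, and ensuring that the dimensional gain compounds correctly through the induction, is where I expect the argument to be most delicate.
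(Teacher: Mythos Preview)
Your reduction to the graded case and then to a polynomial ring via Noether normalization matches the paper exactly, and your outline of the mixed-multiplicity Lech bound (complete reductions, sharp low-dimensional estimates, induction on $d$) is also the paper's route. The divergence is in how you pass from $br(mE)$ to mixed multiplicities once $R$ is a polynomial ring. You propose to work in $S=\mathrm{Sym}(F)=R[T_1,\ldots,T_r]$ and extract $br(mE)$ from the bigraded Hilbert function of $S/\mathcal{E}^n$ as a sum of mixed multiplicities ``of $m$ and $(mE)S$''. But those would be mixed multiplicities in the $(d+r)$-dimensional ring $S$, involving $d+r$ ideals whose identities and colengths you do not specify; it is not clear how the bound $e(mJ_1,\ldots,mJ_d)<(d-1)!\sum\lambda(R/J_i)$, which is stated for $d$ ideals in dimension $d$, would then apply, nor how the constant $(d+r-1)!/r!$ would emerge from that computation.

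The paper avoids this by inserting one more degeneration after the polynomial-ring reduction, and this is the idea missing from your plan: fix a monomial order on $F$ and replace $E$ by its initial module $\mathrm{in}(E)$ (a Gr\"obner degeneration, distinct from the $m$-adic initial-forms step you already used). This preserves $\lambda(F/E)$, can only increase $br$, and forces $E=\bigoplus_{i=1}^r I_i$ to be a direct sum of $m$-primary ideals. Then an existing formula gives
\[
br(mE)=\sum_{\substack{a_1+\cdots+a_r=d\\ a_i\ge 0}} e\bigl((mI_1)^{[a_1]},\ldots,(mI_r)^{[a_r]}\bigr),
\]
so every summand is a mixed multiplicity of $d$ $m$-primary ideals in dimension $d$, the Lech bound for mixed multiplicities applies verbatim, and a straightforward count of the compositions $a_1+\cdots+a_r=d$ produces the factor $\tfrac{(d+r-1)!}{r!}$ together with $\sum_i\lambda(R/I_i)=\lambda(F/E)$.
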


\begin{Th}\label{main-mixed}
Let $(R,m)$ be a Noetherian local ring with dim $R = d \geq 4$, and let $I_1,..,I_d$ be $m$-primary ideals. Then

$$e(mI_1,...,mI_d) < (d-1)! \sum_{i=1}^{d}\lambda(R/I_i)e(R).$$
\end{Th}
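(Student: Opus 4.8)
The plan is to bound the mixed multiplicity $e(mI_1,\dots,mI_d)$ by a symmetric function of the single-ideal multiplicities $e(mI_i)$ and then apply the improved Lech bound of Huneke, Smirnov, and Validashti to each $e(mI_i)$ separately. As in the proof of Theorem~\ref{main}, I would first reduce to the case of an infinite residue field by replacing $R$ with $R(t)=R[t]_{mR[t]}$; this leaves the lengths $\lambda(R/I_i)$, the multiplicity $e(R)$, and every mixed multiplicity unchanged, while guaranteeing the existence of the reductions used below. The further passage to the graded and then polynomial-ring case, carried out exactly as for the Buchsbaum-Rim bound, is what ultimately lets one replace $\le$ by the strict $<$.

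The conceptual core is the Teissier-Rees-Sharp inequality for $m$-primary ideals,
\[
e(mI_1,\dots,mI_d)^d \le \prod_{i=1}^d e(mI_i),
\]
where $e(mI_i)$ denotes the Hilbert-Samuel multiplicity $e(mI_i,\dots,mI_i)$. Feeding this into the arithmetic-geometric mean inequality yields
\[
e(mI_1,\dots,mI_d) \le \Bigl(\prod_{i=1}^d e(mI_i)\Bigr)^{1/d} \le \frac1d\sum_{i=1}^d e(mI_i).
\]
Applying the Huneke-Smirnov-Validashti bound $e(mI_i)\le d!\,\lambda(R/I_i)e(R)$, valid because $d\ge 4$, to each summand then gives
\[
e(mI_1,\dots,mI_d) \le \frac1d\sum_{i=1}^d d!\,\lambda(R/I_i)e(R) = (d-1)!\sum_{i=1}^d \lambda(R/I_i)e(R).
\]
It is worth stressing that the factor $1/d$ produced by the arithmetic-geometric mean step is precisely what turns the $d!$ of the single-ideal Lech bound into the desired $(d-1)!$.

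The remaining, and genuinely delicate, point is to upgrade $\le$ to the strict inequality. Specializing to $I_1=\dots=I_d=I$ collapses both the Teissier and the arithmetic-geometric mean inequalities to equalities and reduces the whole chain to $e(mI)<d!\,\lambda(R/I)e(R)$, a strict form of the Huneke-Smirnov-Validashti bound; hence strictness cannot be harvested from the convexity inequalities and must instead come from a strict single-ideal estimate. This is exactly where I expect the main work to lie, and where Rees's complete reductions enter: after reducing to the graded/polynomial setting with infinite residue field, one selects a complete reduction of the $I_i$, identifies the relevant multiplicity with the colength of a quotient by the associated joint reduction, and estimates that colength directly so as to expose a strict gap. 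I would establish this first in the base dimension $d=4$ and then propagate it to all $d\ge 4$ by a dimension-reduction argument, cutting by a sufficiently general element and comparing multiplicities; controlling this inductive step, and verifying that the strictness survives it, is the part I anticipate to be hardest.
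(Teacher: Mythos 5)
Your high-level route (Teissier--Rees--Sharp plus AM--GM plus the single-ideal Lech bound) is genuinely different from the paper's, and the arithmetic does land exactly on $(d-1)!\sum_i\lambda(R/I_i)e(R)$; but there are two real gaps. First, the inequality $e(J_1,\dots,J_d)^d\le\prod_i e(J_i)$ for $d$ pairwise distinct $m$-primary ideals in an arbitrary Noetherian local ring is an Alexandrov--Fenchel-type statement: the classical Rees--Sharp/Teissier theorem covers two ideals ($e(I^{[i]},J^{[d-i]})^d\le e(I)^i e(J)^{d-i}$), and the general $d$-ideal version does not follow formally from it, so you would need to supply a reference or proof valid in the generality claimed. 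Second, and more seriously, your chain produces only $\le$, while the theorem asserts a strict inequality. As you yourself observe, taking $I_1=\dots=I_d$ collapses every step of your chain to an equality and reduces the problem to a strict form of the Huneke--Smirnov--Validashti bound, which is not what they prove (their bound is $\le$) and is in fact the $r=1$ case of Theorem \ref{main} of this paper. So the entire content of the theorem is concentrated in the part you defer to ``where I expect the main work to lie,'' and that part is not carried out.

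The paper does not use Teissier or AM--GM at all. It reduces, via Propositions \ref{mixed-reduction-to-graded} and \ref{reduction-to-poly} and Theorem \ref{reduction-step}, to $S=k[x_1,\dots,x_d]$ with $k$ infinite, where $e(S)=1$, and then proves the strict bound $e(mI_1,\dots,mI_d)<(d-1)!\sum_i\lambda(S/I_i)$ directly (Theorem \ref{dim4-ineq}) by a double induction on $d$ and on $\sum_i\lambda(S/I_i)$: one expands $e(mI_1,\dots,mI_d)$ using \ref{split-mixed-mult} and \ref{mod-gen-elem} along a complete reduction, replaces $I_i$ by $m(I_i:x)$ via the containment $m(I_i:x)\subseteq I_i$ from \cite[Theorem 2.3]{HSV-Lech}, and absorbs the lower-dimensional terms with the sharp dimension-$2$ and dimension-$3$ estimates of Propositions \ref{dim2-ineq} and \ref{dim3-ineq}. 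The explicit ``$+1$'' bookkeeping in those propositions is precisely what produces the strictness that your convexity argument cannot reach. To repair your outline you would in effect have to prove Theorem \ref{dim4-ineq} anyway, at which point the Teissier/AM--GM reduction becomes superfluous.
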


We prove the theorems in several steps, using the techniques of Lech in \cite{LechIneq} and Huneke, Smirnov, and Validashti in \cite{HSV-Lech}. First we pass to the associated graded ring to reduce to the case where $R$ is a standard graded ring over $R/m = k$, $E$ is a homogeneous submodule, and each $I_i$ is homogeneous. Passing to a Noether normalization of $R$, we reduce to the case where $R$ is a polynomial ring over $k$. We prove bounds for the mixed multiplicity of ideals in a polynomial ring, which through the reduction steps proves 1.2. For 1.1, by passing to the initial module, we may assume $E$ is a direct sum of ideals. Finally we use a formula relating the Buchsbaum-Rim multiplicity of the direct sum of ideals and the mixed multiplicity of those ideals \cite[{Theorem } 4.9]{br-mixed-mult} to obtain the desired bound. \\

\section{Preliminaries}\label{sec-prelim}
In this section we will define the Buchsbaum-Rim multiplicity of a module and the mixed multiplicity of ideals in a Noetherian local ring $(R,m,k)$. We will also define joint reductions and complete reductions as they play a key role in this paper. Throughout this section $d =$ dim $R$.

\begin{Def} \label{Def-BR-Mult}
Let $E \subseteq F = R^r$ be a submodule of a finite-rank free module with $\lambda(F/E) < \infty$. Consider the Rees algebra of $E$, denoted $\mathcal{R}[E]$, which is the image of the natural map $\textrm{Sym}(E) \to \textrm{Sym}(F)$ from the symmetric algebra of $E$ to the symmetric algebra of $F$. Define $E^n = \mathcal{R}[E]_n$ to be the $n$-th degree component of $\mathcal{R}[E]$. Similarly define $F^n = \textrm{Sym}_n(F)$ to be the $n$-th degree component of the symmetric algebra of $F$. Then one can consider $\lambda(F^n/E^n)$; if $E \subsetneq F$ this is a polynomial, $P(n)$, of degree $d+r-1$ when $n >> 0$. The \textit{Buchsbaum-Rim multiplicity} of $E$, denoted $br(E)$, is defined to be the normalized leading coefficient of $P(n)$, which is the following limit: $$br(E) = \lim \limits_{n \to \infty} (d+r-1)!\frac{\lambda(F^n/E^n)}{n^{d+r-1}}.$$ We will sometimes write $br(E)$ as $br_R(E)$ to emphasize that we are regarding $E$ as an $R$-module when taking lengths in the above definition. 
\end{Def}

Notice that if $r = 1$ and $E \subsetneq F$ then $E$ is an $m$-primary ideal of $R$, $F^n \cong R$, and $E^n$ is isomorphic to the $n$-th power of $E$ regarded as an ideal in $R$. In this case, $br(E) = e(E)$; hence the Buchsbaum-Rim multiplicity generalizes the Hilbert-Samuel multiplicity. 

The Hilbert-Samuel multiplicity is defined for $m$-primary ideals $I$ on a finite $R$-module $N$. This can also be done for the Buchsbaum-Rim multiplicity. 

\begin{Def}\label{Def-Gen-BR-Mult} Let $N$ be a finite $R$-module. With the notation as in \ref{Def-BR-Mult}, the \textit{Buchsbaum-Rim multiplicity of $E$ on $N$}, denoted as $br(E,N)$, is defined as the following limit $$br(E,N) = \lim \limits_{n \to \infty} (d+r-1)!\frac{\lambda\big(\frac{F^n}{E^n} \otimes N \big)}{n^{d+r-1}}.$$

\end{Def} We are mainly concerned with the version of the Buchsbaum-Rim multiplicity supplied in \ref{Def-BR-Mult}. However we give the more general version in \ref{Def-Gen-BR-Mult} since we will consider it in a technical step in the next section.

The associativity formula for the Buchsbaum-Rim multiplicity as defined in \ref{Def-BR-Mult} is known and has been proven by Kleiman \cite[{Proposition } 7]{KleimanSteven-2017}. We believe an analogous formula for the Buchsbaum-Rim multiplicity in definition \ref{Def-Gen-BR-Mult} is also known. However the only source that we are able to find for this result is in Validashti's thesis \cite[{Theorem } 6.5.1]{JV-Thesis}. In his thesis he proves an associativity formula for the $j$-multiplicity, which generalizes the Buchsbaum-Rim multiplicity. As we are not concerned with the $j$-multiplicity in this paper we will provide our own proof of the associativity formula.

\begin{Th}\label{BR-associatvity} Let $E \subseteq F = R^r$ be a submodule of a finite-rank free module with $\lambda(F/E) < \infty$. Let $N$ be a finite $R$-module. Let $\Lambda = \{ p \in Supp(N) \; : \; \textrm{dim } R/p = d\}$. Then $$br(E,N) = \sum_{p \in \Lambda} \lambda_{R_p}(N_p)br(E, R/p).$$

\begin{proof}
If $E = F$, then both sides of the equality is zero, hence we may assume $E \subsetneq F$. Consider a prime filtration of $N$, $0 = N_0 \subsetneq ... \subsetneq N_k = N$, where $N_{i}/N_{i-1} \cong R/p_i$ for some prime $p_i \in \textrm{Supp}(N)$. Then for each $1\leq i \leq k$ we have a short exact sequence $0 \to N_{i-1} \to N_i \to R/p_i \to 0$. By \cite[{Proposition} 3.8 (3)]{Buchsbaum-Rim}, $br(E, N_i) = br(E, N_{i-1}) + br(E, R/p_i)$. Hence from the filtration we get $$br(E,N) = \sum_{i=1}^k br(E, R/p_i).$$ 

By \cite[{Theorem} 3.4]{Buchsbaum-Rim} for a finite module $N$ and for $n >> 0$ the degree of the polynomial $\lambda(\frac{F^n}{E^n} \otimes N)$ is dim $N + r - 1$. Hence by definition $br(E, R/p_i) = 0$ if dim $R/p_i < d$. For each $p \in \Lambda$, by localizing the filtration at $p$ it can be seen that the number of times $p$ appears in the filtration is equal to $\lambda_{R_p}(N_p)$. Hence the above summation becomes $$\sum_{i=1}^k br(E, R/p_i) = \sum_{p \in \Lambda} \lambda_{R_p}(N_p)br(E, R/p).$$\end{proof}

\end{Th}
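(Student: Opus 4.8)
The plan is to reduce the computation of $br(E,N)$ to the prime cyclic modules $R/p$ by running a prime filtration of $N$ and exploiting additivity of the Buchsbaum--Rim multiplicity along short exact sequences. First I would dispose of the trivial case $E = F$, where every length $\lambda\big(\frac{F^n}{E^n}\otimes N\big)$ vanishes and both sides are $0$, so from then on I may assume $E \subsetneq F$. Next I would choose a chain $0 = N_0 \subsetneq N_1 \subsetneq \cdots \subsetneq N_k = N$ with $N_i/N_{i-1} \cong R/p_i$ for primes $p_i \in \mathrm{Supp}(N)$, which exists because $N$ is finitely generated over the Noetherian ring $R$.

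The crux is the additivity step. Tensoring the short exact sequence $0 \to N_{i-1} \to N_i \to R/p_i \to 0$ with the finite-length module $\frac{F^n}{E^n}$ is only right exact, so the lengths need not add naively; the discrepancy is governed by $\mathrm{Tor}_1^R\big(\frac{F^n}{E^n}, R/p_i\big)$. The point I would invoke --- supplied by \cite[Proposition 3.8 (3)]{Buchsbaum-Rim} --- is that this error term contributes at strictly lower order than $n^{d+r-1}$, so the normalized leading coefficients are additive: $br(E,N_i) = br(E,N_{i-1}) + br(E, R/p_i)$. Telescoping the filtration then yields $br(E,N) = \sum_{i=1}^k br(E, R/p_i)$.

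Finally I would identify which terms survive and count them. By the degree statement \cite[Theorem 3.4]{Buchsbaum-Rim}, the polynomial attached to $R/p_i$ has degree $\dim R/p_i + r - 1$, so $br(E, R/p_i) = 0$ whenever $\dim R/p_i < d$, and only the primes $p_i \in \Lambda$ contribute. For a fixed $p \in \Lambda$ I would localize the entire filtration at $p$. Since $\dim R/p = d = \dim R$, the prime $p$ is minimal in $R$, and a factor $(R/p_i)_p$ is nonzero exactly when $p_i \subseteq p$; but $p_i \subsetneq p$ would force $\dim R/p_i \geq d+1 > d$, which is impossible, so the surviving factors are precisely those with $p_i = p$, each localizing to $\kappa(p)$ of length one. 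Hence $\#\{ i : p_i = p\} = \lambda_{R_p}(N_p)$, and regrouping the sum over $i$ by the value of $p_i \in \Lambda$ gives the claimed formula. The hard part will be the additivity step: one must genuinely control the failure of exactness of $-\otimes \frac{F^n}{E^n}$ and verify that the $\mathrm{Tor}$ correction is asymptotically negligible, rather than treating the tensor product as if it were exact.
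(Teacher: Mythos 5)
Your argument is correct and follows essentially the same route as the paper: dispose of $E=F$, take a prime filtration, apply the additivity from \cite[Proposition 3.8(3)]{Buchsbaum-Rim} to telescope, kill the low-dimensional primes via the degree statement \cite[Theorem 3.4]{Buchsbaum-Rim}, and count occurrences of each $p\in\Lambda$ by localizing the filtration. Your extra remarks on the $\mathrm{Tor}_1$ correction and on why only $p_i=p$ survives under localization just make explicit what the paper leaves to the cited results.
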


We now state the definitions of mixed multiplicity, joint reduction, and complete reduction.

\begin{Def}\cite[{Definition }17.4.3]{SwansonHuneke}\label{mixed-mult-def} Let $I_1,...,I_r$ be $m$-primary ideals, for $n_1,...,n_r>>0$, $\lambda(R/(I_1^{n_1}\cdots I_r^{n_r}))$ is a polynomial of degree $d$ in $r$ variables, $x_1,..., x_r$. For non-negative integers $a_1,...,a_r$ with $a_1+...+a_r=d$, the coefficient of the term $x_1^{a_1}\cdots x_r^{a_r}$ is $$\frac{1}{a_1!\cdots a_r!}e(I_1^{[a_1]},...,I_r^{[a_r]}).$$
We call $e(I_1^{[a_1]},...,I_r^{[a_r]})$ the mixed multiplicity of $I_1,...,I_r$ of type $(a_1,...,a_r)$. With each $I_i$ listed $a_i$ times, $e(I_1,...,I_1,...,I_r,...,I_r)$ also denotes the mixed multiplicity of $I_1,...,I_r$ of type $(a_1,...,a_r)$. The mixed multiplicity can be defined for a finite module $M$ with respect to $I_1,...,I_r$. In this case, the normalized coefficients are from the polynomial $\lambda(R/(I_1^{n_1}\cdots I_r^{n_r}) \otimes M)$, and the mixed multiplicity is denoted $e(I_1^{[a_1]},...,I_r^{[a_r]}; M)$.

\end{Def}

\begin{Def} \cite[{Definition }17.1.3]{SwansonHuneke} Let $I_1,...,I_r$ be ideals of $R$. If $x_i \in I_i$ and \\$\sum_{i=1}^r x_i I_1\cdots \hat{I_i} \cdots I_r$ is a reduction of $I_1\cdots I_r$, then $x_1,...,x_r$ is a $\textit{joint reduction}$ of $I_1,...,I_r$. 
\end{Def}

For an ideal $I$, let $\overline{I}$ denote the integral closure of $I$. As with the Hilbert-Samuel multiplicity, the mixed multiplicity is invariant under integral closure. This can be seen through joint reduction. Since $I_1\cdots I_r$ is a reduction of $\overline{I_1}\cdots\overline{I_r}$, it follows that if $x_1,...,x_r$ is a joint reduction of $I_1,...,I_r$ then it is also a joint reduction of $\overline{I_1},...,\overline{I_r}$. It follows from \cite[{Theorem }2.4]{rees84} that $e(I_1^{[a_1]},...,I_r^{[a_r]}) = e(\overline{I_1}^{[a_1]},...,\overline{I_r}^{[a_r]})$. This allows us to replace ideals with their integral closures when dealing with mixed multiplicities. 

\begin{Def} \cite[{p. }402]{rees84} Let $U = (I_1,...,I_r)$ be a set of, not necessarily distinct, ideals of $R$. The set of elements $\{x_{ij} : 1 \leq i \leq r, 1 \leq j \leq d, x_{ij} \in I_i \}$, with $y_j = x_{1j}\cdots x_{rj}$, is a $\textit{complete reduction}$ of $U$ if $(y_1,...,y_d)$ is a reduction of $I_1\cdots I_r$.

\end{Def}

Rees showed that complete reductions exist if $k$ is infinite \cite[{Theorem }1.3]{rees84}. Furthermore complete reductions are related to joint reductions by the following corollary.

\begin{Cor}\label{general-implies-joint} \cite[{Corollary }(i)]{rees84} Let $r \geq d$ and $I_1,...,I_r$ be, not necessarily distinct, ideals of $R$ such that $I_1\cdots I_r$ is not contained in any minimal prime ideal of $R$. Let $I_{n_1},...,I_{n_d}$ be a subset of $I_1,...,I_r$. Let $\{x_{ij}\}$ be a complete reduction of $(I_1,...,I_r)$. Set $x_j = x_{n_j j}$, then $x_1,...,x_d$ is a joint reduction of $I_{n_1},...,I_{n_d}.$
\end{Cor}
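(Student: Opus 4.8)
The plan is to translate both the complete-reduction hypothesis and the desired joint-reduction conclusion into statements about valuations, via the valuative criterion of integral closure, and then to observe that the former is pointwise stronger than the latter. Throughout set $P = I_1\cdots I_r$ and $Q = I_{n_1}\cdots I_{n_d}$; since $P \subseteq Q$ and $P$ is not contained in any minimal prime, neither is $Q$, so for every minimal prime $\mathfrak{p}$ of $R$ and every valuation $v$ of the fraction field of $R/\mathfrak{p}$ that is nonnegative on $R/\mathfrak{p}$, the numbers $v(I_i) := \min_{a \in I_i} v(a)$ are finite. I will use two standard facts: that $v$ is multiplicative on products of ideals, so $v(I_{i_1}\cdots I_{i_s}) = \sum_t v(I_{i_t})$ and $v(y_j) = \sum_{i=1}^r v(x_{ij})$; and that, for $J \subseteq I$, $J$ is a reduction of $I$ precisely when $\overline{J} = \overline{I}$, which by the valuative criterion of integral closure \cite[Theorem 6.8.3]{SwansonHuneke} holds once $v(J) = v(I)$ for all such $v$ (using $v(\overline{K}) = v(K)$).

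First I would unwind the hypothesis. Because $x_{ij} \in I_i$ we always have $v(x_{ij}) \geq v(I_i)$, hence $v(y_j) = \sum_i v(x_{ij}) \geq \sum_i v(I_i) = v(P)$ for every $v$, with equality for a given $j$ exactly when $v(x_{ij}) = v(I_i)$ for every row $i$. Since $(y_1,\dots,y_d)$ is a reduction of $P$, the criterion gives $\min_{1 \leq j \leq d} v(y_j) = v(P)$ for all $v$. Consequently, for each $v$ there is a column $j = j(v)$ with $v(x_{ij}) = v(I_i)$ for all rows $i$; in particular $v(x_{n_{j} j}) = v(I_{n_{j}})$, i.e. $v(x_{j}) = v(I_{n_j})$ in the notation $x_j = x_{n_j j}$.

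Next I would check that this consequence is exactly what the joint reduction condition needs. Writing $J = \sum_{j=1}^d x_j I_{n_1}\cdots \widehat{I_{n_j}} \cdots I_{n_d}$, multiplicativity gives $v(J) = \min_j \big( v(x_j) + \sum_{k \neq j} v(I_{n_k}) \big) \geq \sum_k v(I_{n_k}) = v(Q)$, with equality iff some $j$ satisfies $v(x_j) = v(I_{n_j})$. The column $j(v)$ produced above supplies precisely such an index for each $v$, so $v(J) = v(Q)$ for all $v$, whence $J$ is a reduction of $Q$ and $x_1,\dots,x_d$ is a joint reduction of $I_{n_1},\dots,I_{n_d}$. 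The hypotheses $r \geq d$ and the choice of a size-$d$ subset are what make the assignment $j \mapsto n_j$ and the elements $x_j = x_{n_j j}$ well defined.

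The conceptual point is that, valuation by valuation, the reduction condition for the complete reduction forces the minimum to be attained simultaneously in every row of some single column, which is much stronger than the joint-reduction condition, which only demands the minimum attained in one row. The main obstacle is therefore not the comparison itself but the valuation bookkeeping that legitimizes it: one must confirm multiplicativity and integral-closure-invariance of $v$ on the relevant ideals and, crucially, that the hypothesis that $I_1\cdots I_r$ avoids every minimal prime is exactly what keeps all the $v(I_i)$ finite, so that these manipulations are valid over each $R/\mathfrak{p}$.
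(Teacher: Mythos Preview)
The paper does not supply its own proof of this corollary; it is quoted directly from Rees \cite[Corollary~(i)]{rees84} and used as a black box in Section~\ref{sec-polyring}. So there is no in-paper argument to compare against.

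That said, your valuative argument is correct and essentially reconstructs Rees's reasoning. The key step---that $\min_j v(y_j) = v(P)$ forces, for some column $j(v)$, the equalities $v(x_{ij}) = v(I_i)$ simultaneously for \emph{every} row $i$---is exactly the leverage one gets from a complete reduction that one does not get from a mere joint reduction, and you use it cleanly. The bookkeeping checks out: $v$ is additive on products of ideals and elements, $v(A+B) = \min(v(A),v(B))$, and the hypothesis that $I_1\cdots I_r$ avoids every minimal prime guarantees each $v(I_i)$ is finite (so the sum-of-nonnegatives-equals-sum-of-minima argument is legitimate). The only cosmetic point is that your final sentence about $r \geq d$ is slightly off-target: the real role of $r \geq d$ is that a size-$d$ subset of the $r$ ideals exists at all, while the well-definedness of $x_j = x_{n_j j}$ comes from the complete reduction matrix having exactly $d$ columns, one for each chosen ideal.
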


The next theorem shows how joint reductions can be used to reduce the dimension of the ring.

\begin{Th}\label{mod-gen-elem} \cite{teissier1973cycles} \textnormal{(as cited in \cite[{Theorem }17.4.6]{SwansonHuneke})} Suppose $d \geq 2$ and $|k| = \infty$. Let $I_1,...,I_d$ be $m$-primary ideals of $R$ and let $x_1,...,x_d$ be a joint reduction of $I_1,...,I_d$ where $x_1$ is a general element. Denote by $-'$ images in $R/(x_1)$, then
$$e(I_1,...,I_d) = e(I_2',...,I_d').$$
\end{Th}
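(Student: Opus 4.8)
The plan is to use that $x_1$, as a general element of the $m$-primary ideal $I_1$ (such elements exist since $|k|=\infty$), is superficial for the family $I_1,\dots,I_d$, and then to compare the multigraded Hilbert functions of $R$ and of $R':=R/(x_1)$. (The joint reduction hypothesis only provides the ambient context; what the argument actually uses is the generality of $x_1$.) Set $P(n_1,\dots,n_d)=\lambda(R/I_1^{n_1}\cdots I_d^{n_d})$, which for $n_1,\dots,n_d\gg 0$ is a polynomial of degree $d$ whose coefficient of $n_1n_2\cdots n_d$ is $e(I_1,\dots,I_d)$ by Definition~\ref{mixed-mult-def}. Since $I_1$ is $m$-primary and $d\geq 2$, a general $x_1$ lies in no minimal prime of $R$, so $\dim R'=d-1$ and the images $I_i'$ are primary to the maximal ideal of $R'$. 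Writing $P'(n_1,\dots,n_d)=\lambda_{R'}\big(R'/(I_1')^{n_1}\cdots(I_d')^{n_d}\big)$, which has degree $d-1$, the coefficient of $n_2\cdots n_d$ in $P'$ is exactly $e(I_2',\dots,I_d')$; the goal is therefore to match this coefficient with that of $n_1n_2\cdots n_d$ in $P$.

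First I would exploit, for each $\mathbf n=(n_1,\dots,n_d)$, the exact sequence
$$\frac{R}{I_1^{n_1-1}I_2^{n_2}\cdots I_d^{n_d}}\xrightarrow{\,\cdot x_1\,}\frac{R}{I_1^{n_1}I_2^{n_2}\cdots I_d^{n_d}}\longrightarrow\frac{R}{(x_1)+I_1^{n_1}I_2^{n_2}\cdots I_d^{n_d}}\longrightarrow 0,$$
whose left map is well defined because $x_1\in I_1$, whose cokernel has length $P'(\mathbf n)$ because $(x_1)+I_1^{n_1}\cdots I_d^{n_d}$ is the preimage of $(I_1')^{n_1}\cdots(I_d')^{n_d}$, and whose kernel is $(I_1^{n_1}\cdots I_d^{n_d}:x_1)/I_1^{n_1-1}I_2^{n_2}\cdots I_d^{n_d}$. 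Taking lengths yields
$$P'(\mathbf n)=P(\mathbf n)-P(n_1-1,n_2,\dots,n_d)+\lambda\!\left(\frac{I_1^{n_1}\cdots I_d^{n_d}:x_1}{I_1^{n_1-1}I_2^{n_2}\cdots I_d^{n_d}}\right).$$

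The decisive point is to show that the kernel is negligible. Using superficiality I would prove that for $\mathbf n\gg 0$ one has $(I_1^{n_1}\cdots I_d^{n_d}:x_1)=I_1^{n_1-1}I_2^{n_2}\cdots I_d^{n_d}+(0:_R x_1)$, and then that the finite-length module $(0:_R x_1)$ meets $I_1^{n_1-1}\cdots I_d^{n_d}$ trivially for $\mathbf n\gg 0$ by Krull's intersection theorem; hence the kernel has the constant length $\lambda(0:_R x_1)$ for all large $\mathbf n$. Comparing the displayed identity as numerical polynomials and reading off the coefficient of the monomial $n_2\cdots n_d$ then concludes the proof: the constant contributes nothing in positive degree, the first difference $P(\mathbf n)-P(n_1-1,\dots,n_d)$ carries the coefficient of $n_1n_2\cdots n_d$ in $P$ into the coefficient of $n_2\cdots n_d$, and on the left that coefficient is $e(I_2',\dots,I_d')$. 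I expect the kernel-control step to be the main obstacle: establishing the superficial identity uniformly in all the directions $n_1,\dots,n_d$ is exactly where the generality of $x_1$ is needed and where a multigraded Artin--Rees / superficial-element argument must be carried out, after which the coefficient bookkeeping is routine.
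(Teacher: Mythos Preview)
The paper does not supply its own proof of this statement; it is quoted as a known result of Teissier, with a pointer to \cite[Theorem 17.4.6]{SwansonHuneke}. Your proposal is essentially the standard argument found in that reference: compare the multigraded Hilbert polynomials of $R$ and $R'=R/(x_1)$ via the exact sequence induced by multiplication by $x_1$, control the kernel by a multigraded superficial-element identity, and then match the coefficients of $n_1n_2\cdots n_d$ and $n_2\cdots n_d$. The sketch is correct, and you have correctly isolated the only nontrivial step, namely the uniform colon identity $(I_1^{n_1}\cdots I_d^{n_d}:x_1)=I_1^{n_1-1}I_2^{n_2}\cdots I_d^{n_d}+(0:_R x_1)$ for $\mathbf n\gg 0$; this is precisely where the hypothesis $|k|=\infty$ and the generality of $x_1$ are used. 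Your remark that the joint-reduction hypothesis on $x_1,\dots,x_d$ is not actually invoked in the argument---only the superficiality of $x_1$---is also accurate and matches how the result is stated and proved in the literature.
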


We include the next lemma of Rees as it plays a crucial role in our calculations. We will use it often, in tandem with \ref{mod-gen-elem}, in section \ref{sec-polyring}.

\begin{Lemma}\label{split-mixed-mult} \cite[{Lemma }2.5]{rees84} Let $I_1,...,I_d,J$ be $m$-primary ideals of $R$, then $$e(I_1J,I_2,...,I_d) = e(I_1,I_2,...,I_d) + e(J,I_2,...,I_d).$$

\end{Lemma}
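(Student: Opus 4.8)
The plan is to prove the identity by a coefficient-extraction argument on the multivariable Hilbert polynomial of the $d+1$ ideals $J, I_1, I_2, \ldots, I_d$. Since these are all $m$-primary, by \cite[Theorem 17.4.2]{SwansonHuneke} the function
$$P(n_0, n_1, \ldots, n_d) = \lambda\big(R/(J^{n_0} I_1^{n_1} I_2^{n_2}\cdots I_d^{n_d})\big)$$
agrees, for $n_0,\ldots,n_d \gg 0$, with a polynomial of degree $d$ in the $d+1$ variables $n_0,\ldots,n_d$. Write $P_{\mathrm{top}}$ for its homogeneous degree-$d$ part. By Definition \ref{mixed-mult-def}, for a monomial $n_0^{a_0} n_1^{a_1}\cdots n_d^{a_d}$ with $a_0 + \cdots + a_d = d$, its coefficient in $P_{\mathrm{top}}$ is $\tfrac{1}{a_0!\cdots a_d!}$ times the corresponding mixed multiplicity of $J, I_1, \ldots, I_d$.

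The key observation I would use is that specializing $n_0 = n_1$ recovers the left-hand side. Since $(I_1 J)^{n_1} = I_1^{n_1} J^{n_1}$, we have
$$P(n_1, n_1, n_2, \ldots, n_d) = \lambda\big(R/((I_1 J)^{n_1} I_2^{n_2}\cdots I_d^{n_d})\big),$$
which for large arguments is the Hilbert polynomial of the $d$ ideals $I_1 J, I_2, \ldots, I_d$. Applying Definition \ref{mixed-mult-def} to these $d$ ideals (all exponents equal to $1$, so the normalizing factorial is $1$), the coefficient of $n_1 n_2 \cdots n_d$ in this specialized polynomial equals $e(I_1 J, I_2, \ldots, I_d)$.

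It then remains to compute that same coefficient from $P_{\mathrm{top}}$. The substitution $n_0 = n_1$ preserves total degree, so lower-degree terms of $P$ cannot contribute to the degree-$d$ monomial $n_1 n_2 \cdots n_d$, and it suffices to track which monomials of $P_{\mathrm{top}}$ map to it. A monomial $n_0^{a_0} n_1^{a_1}\cdots n_d^{a_d}$ becomes $n_1^{a_0+a_1} n_2^{a_2}\cdots n_d^{a_d}$, which equals $n_1 n_2 \cdots n_d$ exactly when $a_0 + a_1 = 1$ and $a_2 = \cdots = a_d = 1$. The only two such monomials are $n_1 n_2 \cdots n_d$ (type $(0,1,1,\ldots,1)$, coefficient $e(I_1, I_2, \ldots, I_d)$) and $n_0 n_2 \cdots n_d$ (type $(1,0,1,\ldots,1)$, coefficient $e(J, I_2, \ldots, I_d)$), both with normalizing factorial $1$. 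Summing the two contributions gives $e(I_1 J, I_2, \ldots, I_d) = e(I_1, I_2, \ldots, I_d) + e(J, I_2, \ldots, I_d)$.

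The step needing the most care is the coefficient extraction itself: one must check that the specialization $n_0 = n_1$ commutes with passing to the top-degree part and that no lower-order terms of $P$ contaminate the degree-$d$ coefficient, which is precisely why I record that the substitution is degree-preserving. A secondary point is that every mixed multiplicity appearing here has type with all entries in $\{0,1\}$, so all factorials are $1$ and the bookkeeping stays clean; the symmetry of the mixed multiplicity in its arguments is used implicitly when matching coefficients to the stated multiplicities. As an alternative, one could argue inductively via Teissier's theorem \ref{mod-gen-elem}, cutting down by a general joint-reduction element and reducing to the one-dimensional additivity of lengths, but the coefficient argument above is the most direct.
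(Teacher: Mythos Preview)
The paper does not supply its own proof of this lemma; it is simply quoted from Rees \cite[Lemma 2.5]{rees84} and used as a black box throughout Section~\ref{sec-polyring}. Your coefficient-extraction argument is correct and is essentially the standard proof: the identity $(I_1J)^{n_1}=I_1^{n_1}J^{n_1}$ makes the specialization $n_0=n_1$ legitimate, the substitution preserves total degree so lower-order terms cannot interfere, and you correctly identify the two monomials of $P_{\mathrm{top}}$ that collapse to $n_1n_2\cdots n_d$. The bookkeeping with the normalizing factorials (all equal to $1$ here) is also right. There is nothing further to compare.
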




$\newline$
\section{Reduction Steps} \label{sec-reduction-steps}

The reduction steps are essentially due to Lech \cite{LechIneq}, but we will follow the more modern approach in \cite[{Theorem }3.1]{HSV-Lech}. Let $(R,m,k)$ be a Noetherian local ring of dimension $d$, $E \subseteq F=R^r$ a submodule of a finite-rank free module with $\lambda (F/E) < \infty$. 
Now let $G$ be the associated graded ring of $R$, gr$_m(R)$. For a module $M$, let $gr_m(M)$ be the associated graded module of $M$. Define $$E^*=\bigoplus_{i \geq 0} \frac{E \cap m^iF+m^{i+1}F}{m^{i+1}F} \subseteq \textrm{gr}_m(F)$$
which is a homogeneous submodule of gr$_m(F)$. Similarly, define $$(E^n)^*=\bigoplus_{i \geq 0} \frac{E^n\cap m^iF^n+m^{i+1}F^n}{m^{i+1}F^n} \subseteq \text{gr}_m(\textrm{Sym}_n(F))=\textrm{Sym}_n(\text{gr}_m(F)).$$
Let $G_{+}$ denote the maximal homogeneous ideal of $G$. The next result compares the Buchsbaum-Rim multiplicity of $E \subseteq F$ with that of $E^* \subseteq G^r$. Furthermore if the bound in 1.1 holds for $G_+E^*$ then it also holds for $mE$.

\begin{Prop}\label{br-reduction-to-graded}

Let $(R,m)$ be a Noetherian local ring with dim $R = d$, and $E \subseteq F = R^r$ a submodule of a finite-rank free module with $\lambda(F/E) < \infty$, then 

$$br_R(mE) \leq br_G(G_+E^*).$$

Furthermore, if the bound $$br_G(G_+E^*) < \frac{(d+r-1)!}{r!}\lambda(\text{gr}_m(F)/E^*)e(G)$$
holds, then the following bound also holds, $$br_R(mE) < \frac{(d+r-1)!}{r!}\lambda(F/E)e(R).$$
\end{Prop}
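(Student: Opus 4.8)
The plan is to prove the inequality $br_R(mE) \leq br_G(G_+E^*)$ by a degreewise comparison of colengths after passing to initial modules, and then to deduce the ``furthermore'' statement from three standard invariance facts. Throughout I write $(-)^*$ for the initial module with respect to the $m$-adic filtration, exactly as in the definitions of $E^*$ and $(E^n)^*$ in the setup, and I use that the Rees-algebra grading by $n$ is compatible with the $m$-adic filtration, i.e.\ $\text{gr}_m(\text{Sym}_n(F)) = \text{Sym}_n(\text{gr}_m(F))$. A direct check on generators shows $(mE)^n = m^nE^n$, so $br_R(mE) = \lim_n (d+r-1)!\,\lambda(F^n/m^nE^n)/n^{d+r-1}$. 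For each fixed $n$ the module $F^n/m^nE^n$ has finite length: since $\lambda(F/E)<\infty$ gives $m^N F \subseteq E$ for some $N$, we get $m^{n(N+1)}F^n \subseteq m^nE^n$. Because length is preserved on passing to the associated graded of a finite-length module, $\lambda(F^n/m^nE^n) = \lambda(\text{Sym}_n(\text{gr}_m(F))/(m^nE^n)^*)$.

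The heart of the argument is the chain of containments
$$(m^nE^n)^* \supseteq (m^n)^*(E^n)^* = G_+^n(E^n)^* \supseteq G_+^n(E^*)^n = (G_+E^*)^n.$$
Here $(m^n)^* = G_+^n$ is immediate, and the equality $G_+^n(E^*)^n = (G_+E^*)^n$ holds for the same reason $(mE)^n = m^nE^n$ does. The two nontrivial inclusions both follow from multiplicativity of the leading-form map: for $a \in m^n$ and $e \in E^n$ one has $(ae)^* = a^*e^*$ whenever $a^*e^* \neq 0$, and $ae \in m^nE^n$, which gives $(m^n)^*(E^n)^* \subseteq (m^nE^n)^*$; likewise, for $e_1,\dots,e_n \in E$ the product $e_1^*\cdots e_n^*$ equals $(e_1\cdots e_n)^* \in (E^n)^*$ when nonzero, giving $(E^*)^n \subseteq (E^n)^*$. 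In each case the only alternative is that the product of leading forms vanishes, where the containment is trivial, so the inclusions hold unconditionally. Feeding this into the length comparison yields $\lambda(F^n/m^nE^n) \leq \lambda(\text{Sym}_n(\text{gr}_m(F))/(G_+E^*)^n)$ for all $n$; as both sides are eventually polynomial of degree $d+r-1$ with positive leading term, dividing by $n^{d+r-1}$ and letting $n\to\infty$ gives $br_R(mE) \leq br_G(G_+E^*)$.

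For the ``furthermore'' part I record three invariants under passing to the associated graded: $\dim G = d$ (so the normalizing factor $(d+r-1)!$ is the same for $br_R$ and $br_G$), $e(G)=e(R)$ (the Hilbert functions of $R$ and of $G$ agree), and $\lambda(\text{gr}_m(F)/E^*) = \lambda(F/E)$. The last follows by filtering $F/E$ by the submodules $(m^iF+E)/E$ and identifying the $i$-th successive quotient $(m^iF+E)/(m^{i+1}F+E) \cong m^iF/(m^{i+1}F + (m^iF\cap E))$ with the $i$-th graded piece of $\text{gr}_m(F)/E^*$; summing lengths gives the equality. Combining the displayed hypothesis with these identities and the inequality just proved,
$$br_R(mE) \leq br_G(G_+E^*) < \frac{(d+r-1)!}{r!}\lambda(\text{gr}_m(F)/E^*)e(G) = \frac{(d+r-1)!}{r!}\lambda(F/E)e(R),$$
as desired.

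The main obstacle is the central chain of initial-module containments: one must verify that the Rees-algebra grading by $n$ is genuinely respected by the $m$-adic initial-form construction, so that products of leading forms land in the correct degree-$n$ component, and that the leading-form map is multiplicative up to the harmless vanishing case. The remaining steps — the finite-length and length-preservation bookkeeping, and the three invariance facts — are routine.
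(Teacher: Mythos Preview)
Your proof is correct and follows essentially the same approach as the paper. Both arguments reduce the inequality to a degreewise length comparison via the identity $\lambda_R(F^n/(mE)^n)=\lambda_G(\text{Sym}_n(\text{gr}_m(F))/((mE)^n)^*)$ and a chain of containments driven by multiplicativity of leading forms; you factor the chain as $(m^nE^n)^* \supseteq G_+^n(E^n)^* \supseteq G_+^n(E^*)^n = (G_+E^*)^n$, whereas the paper writes it as $((mE)^n)^* \supseteq ((mE)^*)^n \supseteq (G_+E^*)^n$, but these are the same ingredients rearranged, and the ``furthermore'' part is handled identically in both.
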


\begin{proof} Comparing elements, we see that $(E^*)^n \subseteq (E^n)^*$, and $G_+E^* \subseteq (mE)^*$. To show $$br_R(mE) \leq  br_G(G_+E^*)$$ we will show that for any $n$ $$\lambda_R\left(\frac{F^n}{(mE)^n}\right) \leq \lambda_G\left(\frac{\textrm{Sym}_n(\text{gr}_m(F))}{(G_+E^*)^n}\right).$$
First we have $$\lambda_R\left(\frac{F^n}{(mE)^n}\right) = \lambda_G\left(\text{gr}_m\left(\frac{F^n}{(mE)^n}\right)\right) = \lambda_G\left(\frac{\textrm{Sym}_n(\text{gr}_m(F))}{((mE)^n)^*} \right). $$
Now since $((mE)^*)^n \subseteq ((mE)^n)^*$ and $(G_+E^*)^n \subseteq ((mE)^*)^n $ we have 

$$\lambda_G\left(\frac{\textrm{Sym}_n(\text{gr}_m(F))}{((mE)^n)^*} \right) \leq \lambda_G\left(\frac{\textrm{Sym}_n(\text{gr}_m(F))}{((mE)^{*})^n}\right)\leq \lambda_G\left(\frac{\textrm{Sym}_n(\text{gr}_m(F))}{(G_+E^*)^n}\right).$$

Next we prove the second statement. Notice that $\lambda_G(\frac{\text{gr}_m(F)}{E^*})=\lambda_G(\text{gr}_m(F/E))= \lambda_R(F/E)$ and $e(R)=e(G)$, hence the constants on the right hand side of the two inequalities are the same. Now from the first part

$$br_R(mE) \leq br_G(G_+E^*) < \frac{(d+r-1)!}{r!}\lambda(\text{gr}_m(F)/E^*)e(G) = \frac{(d+r-1)!}{r!}\lambda(F/E)e(R).$$ \end{proof}
For mixed multiplicities, we can define $I^*$ by replacing $E$ with $I$ and $F$ with $R$. We now prove a result analogous to \ref{br-reduction-to-graded} for mixed multiplicities.

\begin{Prop} \label{mixed-reduction-to-graded} Let $(R,m,k)$ be a Noetherian local ring with infinite residue field $k$ and dim $R = d$. Let $I_1,...,I_d$ $m$-primary ideals, then $$e_R(mI_1,...,mI_d) \leq e_G(G_+I_1^*,...,G_+I_d^*).$$

Furthermore, if the bound $$e_G(G_+I_1^*,...,G_+I_d^*) < (d-1)! \sum_{i=1}^{d}\lambda(G/I_i^*)e(G) $$ holds, then the following bound also holds, $$e_R(mI_1,...,mI_d) < (d-1)! \sum_{i=1}^{d}\lambda(R/I_i)e(R).$$

\end{Prop}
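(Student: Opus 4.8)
The plan is to follow the pattern of Proposition~\ref{br-reduction-to-graded}, but with an extra argument needed to isolate a single mixed multiplicity. Writing $J_i = mI_i$, I would first establish the pointwise length inequality
$$\lambda_R\!\left(\frac{R}{J_1^{n_1}\cdots J_d^{n_d}}\right) \leq \lambda_G\!\left(\frac{G}{(G_+I_1^*)^{n_1}\cdots (G_+I_d^*)^{n_d}}\right)$$
for all $n_1,\dots,n_d$. This uses the same three ingredients as before: taking associated graded modules preserves length, so the left side equals $\lambda_G(G/(J_1^{n_1}\cdots J_d^{n_d})^*)$; the containment of a product of initial ideals in the initial ideal of the product gives $(I_i^{n_i})^*(m^{n_i})^* \subseteq (J_i^{n_i})^*$; and $G_+I_i^* \subseteq (mI_i)^*$ together with $((mI_i)^*)^{n_i} \subseteq ((mI_i)^{n_i})^*$ assemble into $(G_+I_1^*)^{n_1}\cdots(G_+I_d^*)^{n_d} \subseteq (J_1^{n_1}\cdots J_d^{n_d})^*$, which reverses the inequality of lengths.

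The difficulty is that, unlike the Buchsbaum--Rim case, the mixed multiplicity $e(J_1,\dots,J_d)$ is not the leading coefficient of a \emph{univariate} Hilbert polynomial but rather the single coefficient of $n_1\cdots n_d$ in the degree-$d$ part of a polynomial in $d$ variables. A pointwise domination $P \leq Q$ of two such multivariate Hilbert polynomials controls the whole degree-$d$ form on the positive orthant but does \emph{not} by itself force the coefficient of $n_1\cdots n_d$ in $P$ to be at most that in $Q$ (for instance, $\tfrac12(n_1+n_2)^2$ dominates forms with arbitrarily small $n_1n_2$-coefficient). This is the main obstacle, and it is what makes the statement genuinely harder than Proposition~\ref{br-reduction-to-graded}.

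To get around it I would prove the sharper inequality $e_R(mI_1,\dots,mI_d) \leq e_G((mI_1)^*,\dots,(mI_d)^*)$ by induction on $d$, and then pass to $G_+I_i^*$ at the end using $G_+I_i^* \subseteq (mI_i)^*$ and the antitone dependence of mixed multiplicity on each argument (from Lemma~\ref{split-mixed-mult}). For $d=1$ the claim reduces to the univariate statement $e_R(J_1)\leq e_G(J_1^*)$, which is immediate from the one-variable instance of the length comparison above. For $d\geq 2$, with $|k|=\infty$ I would choose a joint reduction $x_1,\dots,x_d$ of $mI_1,\dots,mI_d$ with $x_1 \in mI_1$ a general element and invoke Theorem~\ref{mod-gen-elem} to write $e_R(mI_1,\dots,mI_d) = e_{R/(x_1)}(\overline{mI_2},\dots,\overline{mI_d})$, reducing both the number of ideals and the dimension by one; the inductive hypothesis is then applied in $R/(x_1)$, and the analogue of Theorem~\ref{mod-gen-elem} in $G$ with the initial form $x_1^*$ returns us to $e_G((mI_1)^*,\dots,(mI_d)^*)$.

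The technical heart of the induction is choosing the single element $x_1$ so that it works on both sides at once: it must be a joint reduction element for the $mI_i$ in $R$, its initial form $x_1^*$ must be a joint reduction element for the $(mI_i)^*$ in $G$, and $x_1$ must be superficial enough that $\mathrm{gr}_m(R/(x_1)) \cong G/(x_1^*)$ with the images of the $(mI_i)^*$ matching the initial ideals computed in $R/(x_1)$. Each of these is an open condition satisfied by a general element, and since $k$ is infinite a common choice exists; verifying that these conditions are compatible and that the associated-graded identity holds is where the real work lies. Finally, the \emph{furthermore} clause is formal and identical to Proposition~\ref{br-reduction-to-graded}: one checks $\lambda(G/I_i^*) = \lambda(\mathrm{gr}_m(R/I_i)) = \lambda(R/I_i)$ and $e(G)=e(R)$, so the constants on the two sides agree and the assumed strict bound on the graded side transfers verbatim to $R$.
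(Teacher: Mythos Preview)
You have correctly isolated the real obstacle---domination of the multivariate Hilbert polynomials does not by itself give an inequality of the $n_1\cdots n_d$ coefficients---and your treatment of the ``furthermore'' clause is right. But the inductive workaround has a gap at exactly the point you flag as ``where the real work lies.'' The claim that the required conditions on $x_1$ are simultaneously satisfiable by a general element is not justified, because the two notions of ``general'' live in different parameter spaces. A general $x_1\in mI_1$ means a general $k$-linear combination of a fixed generating set of $mI_1$; its initial form $x_1^*$ is then the corresponding combination of the initial forms of those generators, and these need not generate $(mI_1)^*$---this is precisely the standard-basis phenomenon. So a Zariski-general $x_1$ in $R$ has no reason to yield an $x_1^*$ that is general in $(mI_1)^*$ in the sense Theorem~\ref{mod-gen-elem} requires, and conversely a general homogeneous $x_1^*$ lifted to $R$ need not be general there. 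There is no single affine space of choices on which both openness statements live, so one cannot simply intersect open sets. The identification $\mathrm{gr}_m(R/(x_1))\cong G/(x_1^*)$ is likewise delicate and can fail for every $x_1$ when $G$ is badly behaved.

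The paper sidesteps all of this with a short argument that never passes to a quotient and never needs genericity on either side. Choose any joint reduction $f_1^*,\dots,f_d^*$ of $I_1^*,\dots,I_d^*$ in $G$, so that $e_G(I_1^*,\dots,I_d^*)=e_G(f_1^*,\dots,f_d^*)$ by Rees; lift each $f_i^*$ to $f_i\in I_i$, check that $(f_1,\dots,f_d)$ is $m$-primary, and run the chain
\[
e_R(I_1,\dots,I_d)\ \le\ e_R(f_1,\dots,f_d)\ \le\ e_G\bigl((f_1,\dots,f_d)^*\bigr)\ \le\ e_G(f_1^*,\dots,f_d^*)\ =\ e_G(I_1^*,\dots,I_d^*).
\]
The first step is Swanson's inequality $e(I_1,\dots,I_d)\le e(f_1,\dots,f_d)$ valid for any $f_i\in I_i$ forming a system of parameters, the middle step is the ordinary univariate Hilbert--Samuel comparison (Proposition~\ref{br-reduction-to-graded} with $r=1$), and the third uses $(f_1^*,\dots,f_d^*)\subseteq(f_1,\dots,f_d)^*$. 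Apply this with $mI_i$ in place of $I_i$ and then use $G_+I_i^*\subseteq(mI_i)^*$ to finish. The point is that passing through a \emph{parameter} ideal collapses the multivariate problem to a univariate one, which is exactly the missing idea in your approach.
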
 
\begin{proof}
Let $f_1^*,...,f_d^*$ be a joint reduction of $I_1^*,...,I_d^*$, then by \cite[{Theorem }2.4]{rees84} (as cited in \cite[{Theorem }17.4.9]{SwansonHuneke}), $e(f_1^*,...,f_d^*) = e(I_1^*,...,I_d^*)$. Now let $f_1,...,f_d$ with each $f_i \in I_i$ be a lift of $f_1^*,...,f_d^*$. Notice that $(f_1,...,f_d)$ is an $m$-primary ideal because $\lambda_R(R/(f_1,...,f_d)) = \lambda_G(G/(f_1,...,f_d)^*) \leq \lambda_G(G/(f_1^*,...,f_d^*)) < \infty$. Then from \cite[{Lemma }2.8]{swanson} 

$$e_R(I_1,...,I_d) \leq e_R(f_1,...,f_d) \leq e_G((f_1,...,f_d)^*) \leq e_G(f_1^*,...,f_d^*) = e_G(I_1^*,...,I_d^*).$$

As the above inequality holds for any $m$-primary ideals, we apply it to $mI_1,...,mI_d$ to get $$e_R(mI_1,...,mI_d) \leq e_G((mI_1)^*, ..., (mI_d)^*) \leq e_G(m^*I_1^*,...,m^*I_d^*) = e_G(G_+I_1^*,...,G_+I_d^*).$$

The second part follows in exactly the same way as the second part of \ref{br-reduction-to-graded}. We have $\lambda(G/I_i^*) = \lambda(R/I_i)$ and $e(G) = e(R)$. The inequality from the first part shows that

\begin{align*}
    e_R(mI_1,...,mI_d) \leq e_G(G_+I_1^*,...,G_+I_d^*) &< (d-1)! \sum_{i=1}^{d}\lambda(G/I_i^*)e(G) \\ &= (d-1)! \sum_{i=1}^{d}\lambda(R/I_i)e(R). 
\end{align*}

\end{proof}

To prove 1.1 and 1.2, the above two results allow us to replace $R$ with $G$, $E \subseteq F$ with $E^* \subseteq G^r$, and $I_i$ with $I_i^*$ to assume $R$ is a standard graded ring over an infinite field, $E$ is a graded submodule, and each $I_i$ is a homogeneous ideal. From here, we reduce to the case where $R$ is a polynomial ring over an infinite field.

\begin{Prop} \label{reduction-to-poly} Let $R$ be a standard graded ring over an infinite field $k$ and let $m$ be the maximal homogeneous ideal of $R$. Let $S=k[x_1,...,x_d]$ be a homogeneous Noether normalization of $R$, such that with $n = (x_1,...,x_d)S$, $nR$ is a reduction of $m$. Let $E \subseteq F = R^r$ be a submodule of a finite-rank free module with $\lambda(F/E) < \infty$. Let $\mathcal{F} = S^r \subseteq F$ and set $\mathcal{E} = E \cap \mathcal{F}$. Then the inequality $$br_S(n\mathcal{E}) < \frac{(d+r-1)!}{r!}\lambda(\mathcal{F}/\mathcal{E}) $$ implies the inequality $$br_R(mE) < \frac{(d+r-1)!}{r!}\lambda(F/E)e(R).$$

Now let $I_1,...,I_d$ be $m$-primary ideals of $R$ and set $J_i = I_i \cap S$, then the inequality $$e_S(nJ_1,...,nJ_d) < (d-1)!\sum_{i=1}^{d} \lambda(S/J_i)$$ implies the inequality $$e_R(mI_1,...,mI_d) < (d-1)!\sum_{i=1}^{d} \lambda(R/I_i)e(R).$$

\end{Prop}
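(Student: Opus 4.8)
The plan is to run two parallel arguments---one for the Buchsbaum--Rim multiplicity, one for the mixed multiplicity---each built from the same four moves: replace $m$ by $nR$; shrink the ideals (resp.\ the module) to objects extended from $S$; pass from $R$ down to $S$ via a rank formula that produces the factor $e(R)$; and finally compare colengths. The two numerical facts driving everything are that $\lambda_R(M)=\dim_k M=\lambda_S(M)$ for every finite-length $R$-module $M$ (since $R/m=k=S/n$ and $m\cap S=n$), and that, because $nR$ is a reduction of $m$, the projection formula applied to the single ideal $n$ gives $e(R)=e_R(nR)=\operatorname{rank}_S(R)\,e(S)=\operatorname{rank}_S(R)\ge 1$, so that multiplying a strict inequality by $e(R)$ preserves strictness.

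For the mixed multiplicity statement I would argue as follows. Since $nR$ is a reduction of $m$, one has $\overline{mI_i}=\overline{(nR)I_i}$, so by the integral-closure invariance of mixed multiplicities recorded in Section~\ref{sec-prelim} we get $e_R(mI_1,\dots,mI_d)=e_R(nI_1,\dots,nI_d)$ with $nI_i:=(nR)I_i$. As $J_i=I_i\cap S$ satisfies $J_iR\subseteq I_i$ we have $(nJ_i)R\subseteq nI_i$, and monotonicity of mixed multiplicities (larger ideals give smaller multiplicity, cf.\ Lemma~\ref{split-mixed-mult}) yields
\[ e_R(nI_1,\dots,nI_d)\le e_R\big((nJ_1)R,\dots,(nJ_d)R\big). \]
Now the ideals $(nJ_i)R$ are extended from $S$, so the length identity turns their $R$-mixed multiplicity into the mixed multiplicity of $R$ as an $S$-module, and the mixed-multiplicity associativity formula (with $S$ a domain of dimension $d$, so that only the generic point of $S$ contributes) gives
\[ e_R\big((nJ_1)R,\dots,(nJ_d)R\big)=\operatorname{rank}_S(R)\,e_S(nJ_1,\dots,nJ_d)=e(R)\,e_S(nJ_1,\dots,nJ_d). \]
Combining with the hypothesis on $S$ and with $\lambda(S/J_i)\le\lambda(R/I_i)$ (from the injection $S/J_i\hookrightarrow R/I_i$) closes the chain:
\[ e_R(mI_1,\dots,mI_d)\le e(R)\,e_S(nJ_1,\dots,nJ_d)<e(R)(d-1)!\sum_i\lambda(S/J_i)\le(d-1)!\sum_i\lambda(R/I_i)e(R). \]

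The Buchsbaum--Rim statement mirrors this. First, $nR$ being a reduction of $m$ forces $nE:=(nR)E$ to be a reduction of $mE$ in the module sense: for $s\gg 0$ we have $m^{s+1}=nR\,m^s$, hence $m^{s+1}E^{s+1}=nR\,m^sE^{s+1}$, which is exactly the identity $(mE)^{s+1}=(nE)(mE)^{s}$ of Rees components; integral-closure invariance of $br$ then gives $br_R(mE)=br_R(nE)$. Since $\mathcal E=E\cap S^r\subseteq E$ we have $(nR)(\mathcal ER)\subseteq nE$, and monotonicity of $br$ (smaller module, larger multiplicity) gives $br_R(nE)\le br_R\big((n\mathcal E)R\big)$. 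Finally, using $F^p=\mathcal F^p\otimes_S R$ and $\big((n\mathcal E)R\big)^p=(n\mathcal E)^pR$, right-exactness of $-\otimes_S R$ identifies $F^p/\big((n\mathcal E)R\big)^p$ with $(\mathcal F^p/(n\mathcal E)^p)\otimes_S R$, so the length identity identifies $br_R\big((n\mathcal E)R\big)$ with the generalized Buchsbaum--Rim multiplicity $br_S(n\mathcal E,R)$ of Definition~\ref{Def-Gen-BR-Mult}. The associativity Theorem~\ref{BR-associatvity} then collapses (again because $S$ is a domain with $\dim S=d$) to $\operatorname{rank}_S(R)\,br_S(n\mathcal E)=e(R)\,br_S(n\mathcal E)$. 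Feeding in the hypothesis on $S$ and $\lambda(\mathcal F/\mathcal E)\le\lambda(F/E)$ (from $\mathcal F/\mathcal E\hookrightarrow F/E$) finishes as in the mixed case.

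The step I expect to be the main obstacle is the rank/projection formula: making precise that lengths agree over $R$ and $S$, and that the associativity formulas degenerate to the single factor $\operatorname{rank}_S(R)=e(R)$ exactly because $S$ is a domain with $\dim S=\dim R=d$. The supporting technical points---that the reduction $nE$ of $mE$ genuinely has the same $br$, and that all the intermediate objects (notably $nE$ and $(n\mathcal E)R$) still have finite colength in $F$---need to be checked but are routine, as are the colength comparisons and the two monotonicity steps.
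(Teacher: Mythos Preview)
Your proposal is correct and follows essentially the same route as the paper: pass from $mE$ (resp.\ $mI_i$) to $(n\mathcal E)R$ (resp.\ $(nJ_i)R$), identify the $R$-multiplicity of these extended objects with the $S$-multiplicity on the module $R$, collapse via associativity to $\operatorname{rank}_S(R)\cdot br_S(n\mathcal E)$ (resp.\ $\operatorname{rank}_S(R)\cdot e_S(nJ_1,\dots,nJ_d)$) with $\operatorname{rank}_S(R)=e(R)$, and finish with the colength comparison. The only cosmetic difference is that the paper handles your first two moves in a single monotonicity step---using $nR\subseteq m$ and $\mathcal E R\subseteq E$ to get $br_R(mE)\le br_R(n\mathcal E R)$ directly---so it never needs the reduction/integral-closure argument for $br_R(mE)=br_R(nE)$ that you insert.
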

\begin{proof}

We first prove the implication of inequalities for the Buchsbaum-Rim multiplicities. Since $\frac{\mathcal{F}}{\mathcal{E}} \hookrightarrow{} \frac{F}{E}$ we have $\lambda_S(\mathcal{F}/\mathcal{E}) \leq \lambda_S(F/E) = \lambda_R(F/E).$ 

We view $R$ as an $S$-module to show that $br_S(n\mathcal{E}, R) = br_R(n\mathcal{E}R)$. Notice dim $R$ = dim $S$ and rank$_R(F)$ = rank$_S(\mathcal{F})$. Hence it is enough to show that for any $i$ $$\lambda_S\left(\frac{\mathcal{F}^i}{(n\mathcal{E})^i} \tens{S} R \right) = \lambda_R\left(\frac{F^i}{(n\mathcal{E}R)^i }\right).$$ But this follows as the two modules are isomorphic as $R$ modules and lengths of modules over $S$ and $R$ coincide.

Next, since $\mathcal{E} R \subseteq E$ and $nR \subseteq m$, we have \begin{equation*}
    br_R(mE) \leq br_R(n\mathcal{E} R) = br_S(n\mathcal{E}, R).
\end{equation*} We now apply the associativity formula in \ref{BR-associatvity}. Since $S$ is a domain, the formula gives 
$$br_S(n\mathcal{E}, R) = br_S(n\mathcal{E})\text{rank}_S(R).$$
Applying the associativity formula for Hilbert-Samuel multiplicity we have $\text{rank}_S(R) = e_S(n, R) = e(R)$. Combining this with the above line shows $br_S(n\mathcal{E}, R) = br_S(n\mathcal{E})e(R)$. Hence, $br_R(mE) \leq br_S(n\mathcal{E})e(R).$ 

Now if the bound holds for $\mathcal{E} \subseteq \mathcal{F}$ then, because $\lambda_S(\mathcal{F}/\mathcal{E}) \leq \lambda_R(F/E)$, we have
$$br_R(mE) \leq br_S(n\mathcal{E})e(R) < \frac{(d+r-1)!}{r!}\lambda_S(\mathcal{F}/\mathcal{E})e(R) \leq \frac{(d+r-1)!}{r!}\lambda_R(F/E)e(R).$$ 

We now provide a similar argument for the statement about mixed multiplicity. Let $J_i = I_i \cap S$, first we have $e_R(mI_1,...,mI_d) \leq e_R(nJ_1R,...,nJ_dR) = e_S(nJ_1,...,nJ_d; R)$. Using the associativity formula for mixed multiplicity \cite[{Theorem }17.4.8]{SwansonHuneke} we have 

$$e_S(nJ_1R,...,nJ_d;R) = e_S(nJ_1,...,nJ_d)\text{rank}_S(R) = e_S(nJ_1,...,nJ_d)e(R).$$

As for the colengths of the ideals, since $S/J_i \hookrightarrow R/I_i$ we have $\lambda(S/J_i) \leq \lambda(R/I_i)$. Now suppose the bound holds for $J_1,...,J_d$, then
\begin{align*} e_R(mI_1,...,mI_d) &\leq e_S(nJ_1,...,nJ_d)e(R) \\ &< (d-1)!\sum_{i=1}^{d} \lambda(S/J_i)e(R) \leq (d-1)!\sum_{i=1}^{d} \lambda(R/I_i)e(R).
\end{align*}
Hence the bound also holds for $I_1,...,I_d$. \end{proof}

We now put together the above results to show that our bounds in the case when $R$ is a Noetherian local ring reduce to the case when $R$ is a polynomial ring.

\begin{Th} \label{reduction-step}

Assume that for any polynomial ring $S = k[x_1,...,x_d]$ over an infinite field $k$, with $n = (x_1,...,x_d)$, and any $\mathcal{E} \subseteq \mathcal{F} = S^r$ a submodule of a finite-rank free $S$-module with $\lambda(\mathcal{F}/\mathcal{E}) < \infty$ that the following inequality holds

$$br_S(n\mathcal{E}) < \frac{(d+r-1)!}{r!}\lambda(\mathcal{F}/\mathcal{E}).$$

Then the following inequality holds for any Noetherian local ring $(R,m)$ with dim $R = d$ and any $E \subseteq F = R^r$ a submodule of a finite-rank free module with $\lambda(F/E) < \infty,$

$$br_R(mE) < \frac{(d+r-1)!}{r!}\lambda(F/E)e(R).$$

Now assume that the following inequality holds for any $n$-primary $S$-ideals $J_1,...,J_d$ $$e_S(nJ_1,...,nJ_d) < (d-1)\sum_{i=1}^{d} \lambda(S/J_i).$$ Then the following inequality holds for any $m$-primary $R$-ideals $I_1,...,I_d$

$$e_R(mI_1,...,mI_d) < (d-1)\sum_{i=1}^{d} \lambda(R/I_i).$$

\end{Th}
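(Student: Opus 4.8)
The plan is to concatenate the three reduction results \ref{br-reduction-to-graded}, \ref{mixed-reduction-to-graded}, and \ref{reduction-to-poly} already in hand, inserting only one preliminary step to arrange that the residue field is infinite (a hypothesis used by both \ref{mixed-reduction-to-graded} and \ref{reduction-to-poly}). So I would organize the argument into four steps and then run the implications backwards, from the polynomial-ring hypothesis to the arbitrary local ring $R$.

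First I would reduce to the case where $k = R/m$ is infinite. If $k$ is finite, replace $R$ by the faithfully flat local extension $R(t) := R[t]_{mR[t]}$, which has maximal ideal $mR(t)$, infinite residue field $k(t)$, the same dimension $d$, and the same Hilbert--Samuel multiplicity $e(R(t)) = e(R)$. Because this extension is flat with $\lambda_{R(t)}(M \otimes_R R(t)) = \lambda_R(M)$ for every finite-length $R$-module $M$, and because the symmetric-algebra image defining $E^n$ and the ideal products $I_1^{n_1}\cdots I_d^{n_d}$ commute with the base change, the defining length functions, and hence the Hilbert polynomials whose normalized leading coefficients are $br$ and the mixed multiplicity, are unchanged. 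Therefore $\lambda(F/E)$, $\lambda(R/I_i)$, $br_R(mE) = br_{R(t)}((mE)R(t))$, and $e_R(mI_1,\dots,mI_d)$ all agree with their $R(t)$-counterparts, so it suffices to prove both bounds over $R(t)$; we may thus assume $k$ is infinite.

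With $k$ infinite, I would next apply \ref{br-reduction-to-graded} and \ref{mixed-reduction-to-graded} to the associated graded ring $G = \text{gr}_m(R)$, a standard graded ring over $k$. These say precisely that the desired bound for $G_+E^*$ (resp.\ for $G_+I_1^*,\dots,G_+I_d^*$) over $G$ implies the bound for $mE$ (resp.\ $mI_1,\dots,mI_d$) over $R$, with the right-hand constants matching via $\lambda_G(\text{gr}_m(F)/E^*) = \lambda_R(F/E)$, $\lambda(G/I_i^*) = \lambda(R/I_i)$, and $e(G) = e(R)$. Then I would apply \ref{reduction-to-poly} to $G$: choosing a homogeneous Noether normalization $S = k[x_1,\dots,x_d]$ with $nG$ a reduction of $G_+$ (available since $k$ is infinite), that proposition shows the polynomial-ring bounds over $S$ imply the graded bounds over $G$. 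Reading the chain backwards, polynomial ring $\Rightarrow$ standard graded $\Rightarrow$ local with infinite residue field $\Rightarrow$ arbitrary local, yields both inequalities.

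The hard part will be the first step: confirming that $br$ and the mixed multiplicity are genuinely invariant under the extension $R \to R(t)$. The facts $e(R(t)) = e(R)$ and the invariance of colengths are standard, but one must check that forming the Rees-algebra graded pieces $E^n \subseteq F^n$ and the mixed products $I_1^{n_1}\cdots I_d^{n_d}$ commutes with the flat base change to $R(t)$, so that the two families of Hilbert polynomials literally coincide. Everything else is bookkeeping: a chain through the three reduction propositions that have already done the substantive work.
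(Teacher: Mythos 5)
Your proposal is correct and follows essentially the same route as the paper: pass to $R(X)=R[X]_{mR[X]}$ to get an infinite residue field, then chain \ref{reduction-to-poly} (polynomial ring $\Rightarrow$ associated graded ring $G=\mathrm{gr}_m(R)$ via a Noether normalization) with \ref{br-reduction-to-graded} and \ref{mixed-reduction-to-graded} ($G \Rightarrow R$). Your extra care about checking that the Buchsbaum--Rim and mixed multiplicities are preserved under the flat base change to $R(X)$ is a reasonable elaboration of a step the paper treats as standard, not a divergence in method.
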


\begin{proof}
We use a standard technique to assume the residue field $R/m = k$ is infinite. Let $R(X)=R[X]_{mR[X]}$, then $R \rightarrow R(X)$ is a faithfully flat extension of local rings with the same dimensions and multiplicities. Moreover, for any $R$-module $M$, $\lambda_R(M)=\lambda_{R(X)}(M \otimes R(X))$. Hence, replacing $R$ with $R(X)$, we may assume that $k$ is infinite. 
 
Let $G = gr_m(R)$ and $G_+$ be the maximal homogeneous ideal of $G$, then $G$ is a standard graded ring over $k$. Let $S$ be a Noether normalization of $G$. Then $S$ is isomorphic to a polynomial ring over $k$ of dimension $d$. Applying \ref{reduction-to-poly}, the bounds for the Buchsbaum-Rim and mixed multiplicities hold for $G$. Finally from \ref{br-reduction-to-graded} and \ref{mixed-reduction-to-graded}, since the bounds for the multiplicities hold for $G$ they also hold for $R$.
\end{proof}

In the next section we will prove Lech type bounds for the mixed multiplicity of ideals in a polynomial ring. To obtain our results in the polynomial ring case we employ extensively Rees's notion of complete reductions \cite{rees84} defined in section \ref{sec-prelim}.
$\newline$

\section{Mixed Multiplicity Bounds in Polynomial Rings}\label{sec-polyring}

In this section we prove a Lech type bound for the mixed multiplicities of ideals in a polynomial ring. We first prove technical bounds for dimensions 2 and 3. For higher dimensions we use complete reductions to reduce to the low dimensional case. Throughout this section $k$ is assumed to be infinite and $m$ denotes the homogeneous maximal ideal.

\begin{Prop} \label{dim2-ineq}

Let $R = k[x_1,x_2]$ and $I_1,...,I_r$ be $m$-primary ideals with $r \geq 2$. Let $x$ be a general linear form and denote $-'$ to be images in $R' = R/(x)$. Then 
$$2\sum_{1\leq i < j \leq r} e(I_i, I_j) + (r-1) \sum_{i=1}^r e(I_i') \leq 2(r-1) \sum_{i=1}^r \lambda(R/I_i).$$

\end{Prop}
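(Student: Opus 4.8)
The plan is to set $R=k[x_1,x_2]$ (dimension $d=2$) and exploit the fact that a single general linear form $x$ already carries most of the multiplicity information, since $R'=R/(x)\cong k[t]$ is one-dimensional. The key observation is that for a single $m$-primary ideal $I$ in the two-dimensional polynomial ring, one has an exact Lech-type bound available: passing modulo a general linear form $x$ chosen so that $x$ is a superficial (general) element for $I$, one gets $e(I)=e(I')$ where $e(I')=\lambda(R'/I')$ since $R'$ is a one-dimensional regular ring. The strategy is to combine the additivity of mixed multiplicities in each slot (Lemma \ref{split-mixed-mult}), the dimension-reduction via joint reductions (Theorem \ref{mod-gen-elem}), and a careful count relating the mixed multiplicities $e(I_i,I_j)$ and the one-variable multiplicities $e(I_i')$ to the colengths $\lambda(R/I_i)$.

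First I would reduce the mixed multiplicities $e(I_i,I_j)$ to one-dimensional data. For each pair $i<j$, by Theorem \ref{mod-gen-elem} applied with a general element, $e(I_i,I_j)$ equals a Hilbert-Samuel multiplicity in the one-dimensional ring $R'$; more precisely, modding out by one general element of a joint reduction of $I_i,I_j$ reduces $e(I_i,I_j)$ to $e(I_j')$ (or $e(I_i')$, by symmetry of the argument), which equals $\lambda(R'/I_j')$. The natural quantity to control is how $e(I_i,I_j)$ sits between the two single-ideal quantities. I expect to use the standard inequality $e(I_i,I_j)\le \lambda(R/I_i)$ and, symmetrically, $e(I_i,I_j)\le\lambda(R/I_j)$, coming from the fact that in dimension $2$ each factor of a joint reduction drops length by at most $\lambda(R/I_i)$. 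Summing these over all pairs would produce the factor $2\sum_{i<j}e(I_i,I_j)$ on the left bounded by an expression in the $\lambda(R/I_i)$.

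The second ingredient is the term $(r-1)\sum_i e(I_i')$. Since $e(I_i')=\lambda(R'/I_i')$ and $x$ is general, $\lambda(R'/I_i')\le\lambda(R/I_i)$ follows from the short exact sequence $0\to R/(I_i:x)\xrightarrow{\,x\,}R/I_i\to R'/I_i'\to 0$, which gives $\lambda(R'/I_i')=\lambda(R/I_i)-\lambda(R/(I_i:x))\le\lambda(R/I_i)$. Thus each summand $e(I_i')\le\lambda(R/I_i)$, and multiplying by the coefficient $(r-1)$ contributes $(r-1)\sum_i\lambda(R/I_i)$. Adding this to the pairwise contribution and checking that the coefficients combine to the stated $2(r-1)\sum_i\lambda(R/I_i)$ is then a bookkeeping step: the pairwise terms, each of which can be bounded by $\lambda(R/I_i)$ and appear with total multiplicity $2(r-1)$ after symmetrization (since ideal $I_i$ pairs with $r-1$ others and each pair is counted with factor $2$), account for the remaining $(r-1)\sum_i\lambda(R/I_i)$.

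The main obstacle I anticipate is getting the constant $2(r-1)$ on the right exactly, rather than something weaker: the naive bounds $e(I_i,I_j)\le\min\{\lambda(R/I_i),\lambda(R/I_j)\}$ and $e(I_i')\le\lambda(R/I_i)$ give a bound with the right shape, but making the coefficients match requires a clean symmetric accounting, most likely by choosing the single general linear form $x$ simultaneously for all the $I_i$ (possible since $k$ is infinite and finitely many genericity conditions are imposed) and then using Teissier's reduction so that \emph{every} $e(I_i,I_j)$ degenerates to a one-variable length computation in the \emph{same} ring $R'$. The delicate point is that $e(I_i,I_j)$ reduces, via the general element, to $e(I_j')=\lambda(R'/I_j')$, so that $\sum_{i<j}e(I_i,I_j)$ becomes a sum of one-variable lengths that can be reorganized against $\sum_i(r-1)e(I_i')$; verifying that the combined left-hand side telescopes into $2(r-1)\sum_i\lambda(R'/I_i')\le 2(r-1)\sum_i\lambda(R/I_i)$ is where the real content lies.
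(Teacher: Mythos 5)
Your proposal has a genuine gap: the key inequality you lean on, $e(I_i,I_j)\le\min\{\lambda(R/I_i),\lambda(R/I_j)\}$, is false. Already for $I_i=I_j=m^2$ in $k[x_1,x_2]$ one has $e(I_i,I_j)=e(m^2)=4$ while $\lambda(R/m^2)=3$. This is not a repairable constant: your decomposition bounds $(r-1)\sum_i e(I_i')$ by $(r-1)\sum_i\lambda(R/I_i)$ on its own, which forces you to prove $2\sum_{i<j}e(I_i,I_j)\le(r-1)\sum_i\lambda(R/I_i)$, and for $r=2$, $I_1=I_2=m^2$ this reads $8\le 6$. The two halves of the left-hand side cannot be estimated against $\lambda(R/I_i)$ separately; the slack must be shared between $e(I_i)$ and $e(I_i')$. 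A second problem is your use of Theorem \ref{mod-gen-elem}: to conclude $e(I_i,I_j)=e(I_j')$ you need the element $x$ to lie in $I_i$ and be part of a joint reduction of $I_i,I_j$, but a general \emph{linear} form lies in $m$, not in $I_i$ (unless $I_i=m$), so the reduction to the single ring $R'=R/(x)$ does not degenerate the mixed multiplicities the way you describe.

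The paper's proof is shorter and couples the terms correctly: it first applies the Minkowski-type inequality $2e(I_i,I_j)\le e(I_i)+e(I_j)$ (see \cite[p.~365]{SwansonHuneke}) to replace $2\sum_{i<j}e(I_i,I_j)$ by $(r-1)\sum_i e(I_i)$, reducing the claim to $(r-1)\sum_i\bigl(e(I_i)+e(I_i')\bigr)\le 2(r-1)\sum_i\lambda(R/I_i)$, and then invokes the sharp two-dimensional bound $e(I)+e(I')\le 2\lambda(R/I)$ of \cite[Corollary~4.5]{HSV-Lech}. Note that for $I=m^2$ that bound is an equality ($4+2=2\cdot 3$), which is exactly why any argument estimating $e(I_i)$ (or $e(I_i,I_j)$) and $e(I_i')$ against $\lambda(R/I_i)$ independently must fail.
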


\begin{proof}
First, because $2e(I_i,I_j) \leq e(I_i)+e(I_j)$ \cite[p. 365]{SwansonHuneke}, we have
$$2\sum_{1 \leq i < j \leq r} e(I_i,I_j) \leq (r-1) \sum_{i=1}^r e(I_i).$$
Using this inequality, we have reduced to showing $$(r-1)\sum_{i=1}^r (e(I_i)+e(I_i')) \leq 2(r-1)\sum_{i=1}^r \lambda(R/I_i).$$
From \cite[{Corollary }4.5]{HSV-Lech}, we have $e(I_i)+e(I_i') \leq 2 \lambda(R/I_i)$, which gives the result. \end{proof}

When all of the ideals are powers of the maximal ideal the two bounds in the proof are sharp, hence our bound is also sharp in this case.

Next, \ref{lech-mixed} is a Lech bound for mixed multiplicity valid in any dimension. We will need this result to deal with a particular case in the proof of \ref{dim3-ineq}.

\begin{Prop} \label{lech-mixed}
Let $R= k[x_1,...,x_d]$ and $I_1,...,I_d$ be $m$-primary ideals. Then 
$$e(I_1,...,I_d) \leq (d-1)!\sum_{i=1}^{d} \lambda(R/I_i).$$
\end{Prop}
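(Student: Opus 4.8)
The plan is to prove Proposition \ref{lech-mixed} by induction on the dimension $d$, using the splitting lemma (Lemma \ref{split-mixed-mult}) to reduce any mixed multiplicity to a sum of multiplicities of powers of the maximal ideal, for which the classical Lech inequality is already available. The base case $d=1$ is immediate, since for a one-dimensional ring $e(I_1)=\lambda(R/\overline{I_1})\le\lambda(R/I_1)$, and $(d-1)!=1$.

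For the inductive step I would first use a general element to drop dimension. Since $k$ is infinite, choose a joint reduction $x_1,\dots,x_d$ of $I_1,\dots,I_d$ with $x_1$ a general linear form; by Theorem \ref{mod-gen-elem} we have $e(I_1,\dots,I_d)=e(I_2',\dots,I_d')$ where $-'$ denotes images in $R/(x_1)\cong k[x_1,\dots,x_{d-1}]$ (after a linear change of coordinates, since $x_1$ is a general linear form and $R$ is a polynomial ring). This rewrites the $d$-fold mixed multiplicity over $R$ as a $(d-1)$-fold mixed multiplicity over a polynomial ring of dimension $d-1$, to which the inductive hypothesis applies:
\begin{equation*}
e(I_1,\dots,I_d)=e(I_2',\dots,I_d')\le (d-2)!\sum_{i=2}^{d}\lambda(R'/I_i').
\end{equation*}
The remaining task is to control each $\lambda(R'/I_i')$ by $\lambda(R/I_i)$ and to absorb the change in factorial. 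Since $x_1$ is a nonzerodivisor on $R$, the exact sequence $0\to R/(I_i:x_1)\xrightarrow{x_1} R/I_i\to R'/I_i'\to 0$ gives $\lambda(R'/I_i')=\lambda(R/I_i)-\lambda(R/(I_i:x_1))\le\lambda(R/I_i)$, so that $e(I_1,\dots,I_d)\le (d-2)!\sum_{i=2}^{d}\lambda(R/I_i)$.

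This already yields a bound of the right shape but with a troublesome asymmetry: the factor $(d-2)!$ is smaller than the claimed $(d-1)!$, yet the sum runs only over $i\ge 2$ rather than over all $i$. The main obstacle is reconciling these two discrepancies into the symmetric bound $(d-1)!\sum_{i=1}^d\lambda(R/I_i)$. One clean route is to avoid the general-element reduction entirely and instead apply the splitting lemma directly: writing each $I_i\subseteq m^{?}$ is not guaranteed, so instead I would use that $e(I_i^{[d]})=e(I_i)\le (d-1)!\,\lambda(R/I_i)\,e(R)=(d-1)!\,\lambda(R/I_i)$ by the classical Lech inequality in the polynomial ring $R$ (where $e(R)=1$), and repeatedly apply Lemma \ref{split-mixed-mult} together with the arithmetic-geometric-type bound $e(I_1,\dots,I_d)\le\max_i e(I_i^{[d]})$ or the symmetrized inequality $d\cdot e(I_1,\dots,I_d)\le\sum_{i=1}^d e(I_i^{[d]})$ (a standard consequence of the Teissier--Rees--Sharp inequalities for mixed multiplicities). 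Combining the latter with Lech's bound on each $e(I_i)$ gives directly
\begin{equation*}
e(I_1,\dots,I_d)\le\frac1d\sum_{i=1}^d e(I_i)\le\frac{(d-1)!}{d}\sum_{i=1}^d\lambda(R/I_i)\le (d-1)!\sum_{i=1}^d\lambda(R/I_i),
\end{equation*}
which is the claim with room to spare. I expect the genuinely delicate point to be justifying the symmetrization inequality $d\cdot e(I_1,\dots,I_d)\le\sum_i e(I_i^{[d]})$ cleanly in this generality; if that proves awkward to cite, the inductive argument above combined with a careful symmetric accounting of which ideal is singled out by the general element is the fallback.
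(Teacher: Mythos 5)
Your proposal does not close, and each of the two routes you sketch has a concrete problem. In the first route you cannot take $x_1$ to be a general \emph{linear} form: Theorem \ref{mod-gen-elem} requires $x_1\in I_1$ to be part of a joint reduction of $I_1,\dots,I_d$, so $x_1$ is a general element of $I_1$, which for $I_1\subseteq m^2$ has no linear part; consequently $R/(x_1)$ is a hypersurface ring of multiplicity $\mathrm{ord}(x_1)>1$ rather than a polynomial ring, and your inductive hypothesis does not apply to $e(I_2',\dots,I_d')$. (You also lose the $I_1$ slot entirely, which is the asymmetry you noticed.) In the second route the entire weight of the argument rests on the symmetrized inequality $d\cdot e(I_1,\dots,I_d)\le\sum_i e(I_i)$, equivalently (via AM--GM) on the Alexandrov--Fenchel-type inequality $e(I_1,\dots,I_d)^d\le e(I_1)\cdots e(I_d)$ for $d$ \emph{distinct} ideals. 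What Rees--Sharp actually prove is the two-ideal chain $e(I^{[i]},J^{[d-i]})^2\le e(I^{[i+1]},J^{[d-i-1]})\,e(I^{[i-1]},J^{[d-i+1]})$; the $d$-ideal version is a genuinely stronger statement that you neither prove nor precisely locate, and it does not follow formally from the two-ideal case. You also misquote Lech's inequality as $e(I)\le (d-1)!\,\lambda(R/I)e(R)$ --- the correct constant is $d!$ --- so the final chain closes exactly, with no room to spare; and the weaker bound $e(I_1,\dots,I_d)\le\max_i e(I_i)$ that you offer as an alternative only yields $d!\max_i\lambda(R/I_i)$, which is not bounded by $(d-1)!\sum_i\lambda(R/I_i)$ unless all colengths are equal.

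The paper avoids all of this with a self-contained double induction on $d$ and on $\sum_i\lambda(R/I_i)$: after passing to integral closures, one sets $\tilde I_1=I_1:x$ for a general linear form $x$ chosen inside a complete reduction of $m,I_1,\dots,I_d$, so that by \ref{general-implies-joint} the form $x$ is part of a joint reduction of $m,I_2,\dots,I_d$ (the linear form occupies the slot of $m$, not of $I_1$). The containment $m\tilde I_1\subseteq I_1$ and Lemma \ref{split-mixed-mult} give $e(I_1,\dots,I_d)\le e(m,I_2,\dots,I_d)+e(\tilde I_1,I_2,\dots,I_d)$; the first term is handled by \ref{mod-gen-elem} and induction on $d$, the second by induction on total colength, and the bound is reassembled using $\lambda(R'/I_1')+\lambda(R/\tilde I_1)=\lambda(R/I_1)$ together with choosing $I_1$ to maximize $\lambda(R'/I_1')$. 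If you want to salvage your route, you must supply a proof or a precise reference for the $d$-ideal Minkowski/Alexandrov--Fenchel inequality; it is true for a polynomial ring, but it is a far heavier input than the elementary lemmas the paper actually uses.
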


\begin{proof}
We may assume $I_1,...,I_d$ are integrally closed. We induct on $d$. The base case where $d=1$ is clear. Let $d > 1$, and we will induct on $\sum_{i=1}^{d} \lambda(R/I_i)$. The base case is when each $I_i = m$. This holds since $e(m,...,m) = 1$. For the induction, let $x \in m $ be a general linear form not contained in any $I_i \neq m$. Let $-'$ denote images in $R' = R/(x)$.  We may choose $x$ to be $x_{11}$ in a complete reduction of $m, I_1,...,I_d$. Without loss of generality, we may assume that $\lambda(R'/I_1') = \textrm{max}\{\lambda(R'/I_i') : 1 \leq i \leq d \}$. Now by \ref{general-implies-joint}, $x$ is part of a joint reduction of $m, I_2, ..., I_d$.

Set $\tilde{I_1} = I_1 : x$. By \cite[{Theorem }2.3]{HSV-Lech} $m\tilde{I_1} \subseteq I_1$, hence $e(I_1,...,I_d) \leq e(m\tilde{I_1},...,I_d)$. Expanding the mixed multiplicity using \ref{split-mixed-mult} and applying \ref{mod-gen-elem}, we have
$$e(I_1,...,I_d) \leq e(m\tilde{I_1},...,I_d) = e(m,I_2,...,I_d) + e(\tilde{I_1},...,I_d) = e(I_2',...,I_d') + e(\tilde{I_1},...,I_d). $$
By induction on $d$, 
$$e(I_2',...,I_d')\leq (d-2)! \sum_{i=2}^{d} \lambda(R'/I_i').$$
By induction on $\sum_{i=1}^d \lambda(R/I_i)$, $$e(\tilde{I_1},...,I_d) \leq (d-1)!\Big( \lambda(R/\tilde{I_1}) + \sum_{i=2}^{d} \lambda(R/I_i) \Big).$$
Using these two inequalities from induction, we have
$$ e(I_1,...,I_d) \leq (d-2)! \sum_{i=2}^{d} \lambda(R'/I_i') + (d-1)!\Big( \lambda(R/\tilde{I_1}) + \sum_{i=2}^{d} \lambda(R/I_i) \Big). $$
Now $\lambda(R'/I_1') = \textrm{max}\{\lambda(R'/I_i') : 1 \leq i \leq d \}$ gives
$$(d-2)! \sum_{i=2}^{d} \lambda(R'/I_i') \leq  (d-1)!\lambda(R'/I_1').$$
Hence
$$ e(I_1,...,I_d) \leq (d-1)!\lambda(R'/I_1') + (d-1)!\Big( \lambda(R/\tilde{I_1}) + \sum_{i=2}^{d} \lambda(R/I_i) \Big). $$
Finally by \cite[{Lemma }2.6]{HSV-Lech}, $\lambda(R'/I_1') + \lambda(R/\tilde{I_1}) = \lambda(R/I_1)$, which gives the result.
\end{proof}
Now we can use the above inequality and the dimension two result to prove a Lech type bound for a polynomial ring in three variables.

\begin{Prop} \label{dim3-ineq}
Let $R=k[x_1,x_2,x_3]$ and $I_1,...,I_4$  be $m$-primary ideals. Let $x,y$ be general linear forms. Let $-'$ and $-''$ denote images in $R' = R/(x)$ and $R'' = R/(x,y)$ respectively. Then $$\sum_{1 \leq i < j < k \leq 4} e(I_i, I_j, I_k) + \sum_{1 \leq i < j \leq 4} e(I_i', I_j') + \sum_{i=1}^4 e(I_i'')+1 \leq 6\sum_{i=1}^4 \lambda(R/I_i).$$

\end{Prop}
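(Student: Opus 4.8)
The plan is to imitate the inductive proof of \ref{lech-mixed}, but to carry the lower-dimensional terms along rather than estimate them away immediately, so that the extra slack they provide is retained; this is the same device by which \ref{dim2-ineq} keeps the one-dimensional multiplicities $e(I_i')$. Since each mixed multiplicity appearing on the left is unchanged when the ideals are replaced by their integral closures, I would first assume every $I_i$ is integrally closed, and then induct on the total colength $\sum_{i=1}^{4}\lambda(R/I_i)$. For the base case, all $I_i=m$, every triple, pair and singleton multiplicity equals $1$, so the left-hand side is $4+6+4+1=15$, comfortably below $6\cdot 4=24$.

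For the inductive step I would fix $x$ to be the general linear form occupying the first column and the $m$-row of a complete reduction of the family $m,I_1,\dots,I_4$, and $y$ a corresponding general form for $R'$. By \ref{general-implies-joint}, this single $x$ is simultaneously part of a joint reduction of $m,I_j,I_k$ for every pair $\{j,k\}$, so \ref{mod-gen-elem} applies uniformly with $R'=R/(x)$. After relabelling I may assume $\lambda(R'/I_1')=\max_i\lambda(R'/I_i')$, and I set $\tilde I_1=I_1:x$. The containment $m\tilde I_1\subseteq I_1$ from \cite[Theorem 2.3]{HSV-Lech}, combined with \ref{split-mixed-mult} and \ref{mod-gen-elem} applied to the $m$-term, gives
\[
 e(I_1,I_j,I_k)\ \le\ e(m\tilde I_1,I_j,I_k)\ =\ e(I_j',I_k')+e(\tilde I_1,I_j,I_k)
\]
for each $\{j,k\}\subseteq\{2,3,4\}$. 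Summing over the three triples containing $I_1$ (the triple $e(I_2,I_3,I_4)$ is left alone) converts the triple-sum $A$ into the same sum $\tilde A$ with $I_1$ replaced by $\tilde I_1$, at the cost of the three two-dimensional terms $e(I_2',I_3')+e(I_2',I_4')+e(I_3',I_4')$. Because $\lambda(R/\tilde I_1)=\lambda(R/I_1)-\lambda(R'/I_1')<\lambda(R/I_1)$ by \cite[Lemma 2.6]{HSV-Lech}, the configuration $(\tilde I_1,I_2,I_3,I_4)$ has strictly smaller total colength, so the inductive hypothesis applies to it.

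Feeding the inductive bound for $(\tilde I_1,I_2,I_3,I_4)$ back in and using $\lambda(R/I_1)=\lambda(R/\tilde I_1)+\lambda(R'/I_1')$, the whole inequality collapses to the single estimate
\[
 (B-\tilde B)+(C-\tilde C)+e(I_2',I_3')+e(I_2',I_4')+e(I_3',I_4')\ \le\ 6\,\lambda(R'/I_1'),
\]
where $B,C$ (respectively $\tilde B,\tilde C$) denote the pair-sum over $R'$ and the singleton-sum over $R''$ for $(I_1,\dots,I_4)$ (respectively for $(\tilde I_1,I_2,I_3,I_4)$). I expect this to be the main obstacle. The quantities $B-\tilde B=\sum_{j=2}^4\big(e(I_1',I_j')-e((\tilde I_1)',I_j')\big)$ and $C-\tilde C=e(I_1'')-e((\tilde I_1)'')$ require understanding precisely how the two-dimensional data over $R'$ changes when $I_1'$ is enlarged to $(I_1:x)R'$; dropping the subtracted terms is too wasteful (it already fails numerically when the $I_i'$ are all equal to $m'$), so the cancellation between those terms and the leftover pairs must be exploited. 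My intended tool is \ref{dim2-ineq} applied to $I_1',\dots,I_4'$ in $R'$, which bounds $2B+3C$ by $6\sum_i\lambda(R'/I_i')$, together with the maximality of $\lambda(R'/I_1')$ and the $R'$-analogue of the colength identity; the constant $6=3!$ should fall out exactly because \ref{dim2-ineq} is itself sharp on powers of the maximal ideal. Finally, the degenerate configurations in which this reduction stalls --- notably when the maximal image is already $I_1'=m'$, so that $\tilde I_1=R$ and no shrinking occurs --- I would settle separately by the uniform Lech bound \ref{lech-mixed}, which is exactly the particular case it was introduced to handle.
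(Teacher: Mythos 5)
There is a genuine gap, and you have in fact located it yourself: the ``single estimate'' to which you reduce the problem is false, so the reduction cannot be closed. Concretely, take $I_1=I_2=I_3=I_4=m^2$. Then $\tilde I_1=m^2:x=m$, $\lambda(R'/I_1')=\lambda(R'/(m')^2)=3$, and one computes $B-\tilde B=\sum_{j=2}^4\bigl(e((m')^2,(m')^2)-e(m',(m')^2)\bigr)=3(4-2)=6$, $C-\tilde C=e((m'')^2)-e(m'')=2-1=1$, and $e(I_2',I_3')+e(I_2',I_4')+e(I_3',I_4')=3\cdot 4=12$. The left-hand side of your residual inequality is $6+1+12=19$, while the right-hand side is $6\lambda(R'/I_1')=18$. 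So replacing only the single ideal $I_1$ by $m\tilde I_1$ does not leave enough slack: the three pair terms generated as the ``cost'' of the replacement, together with the loss $(B-\tilde B)+(C-\tilde C)$ incurred when passing from the inductive bound for $(\tilde I_1,I_2,I_3,I_4)$ back to $(I_1,\dots,I_4)$, already exceed the colength you gain. No application of \ref{dim2-ineq} can repair this, because the inequality you need is simply not true.

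The paper's proof avoids this by replacing \emph{all four} ideals $I_i$ by $m\tilde I_i$ (in two rounds, first $I_1,I_2$ and then $I_3,I_4$), and by pushing the leftover term $2e(I_3',I_4')$ one further dimension down in the same way. After these replacements the induction hypothesis is applied to $(\tilde I_1,\dots,\tilde I_4)$ with its own pair- and singleton-sums already present, so no differences of the form $B-\tilde B$ ever appear; what remains is exactly $2\sum_{1\le i<j\le 4}e(I_i',I_j')+3\sum_{i=1}^4 e(I_i'')$, which is precisely the quantity that \ref{dim2-ineq} with $r=4$ bounds by $6\sum_i\lambda(R'/I_i')$. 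Your instinct to keep the lower-dimensional terms and to treat the degenerate case $I_i=m$ separately via \ref{lech-mixed} matches the paper, but the decisive idea you are missing is the simultaneous replacement of all the ideals, which is what makes the bookkeeping come out to the coefficients $(2,3)$ that \ref{dim2-ineq} can absorb.
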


\begin{proof}

We may assume each $I_i$ is integrally closed. We induct on $\sum_{i=1}^4 \lambda(R/I_i)$. In the base case, each $I_i=m$, and the result holds. We now have two cases. In the first case, where none of the ideals are $m$, we will continue the induction. In the latter case, where at least one ideal is $m$, we will show the inequality directly. \\

(1) Suppose $I_i \neq m$ for all $i$. Let $\tilde{I_i}=I_i:x$. As each $I_i$ is integrally closed, by \cite[{Theorem }2.3]{HSV-Lech} $m\tilde{I_i} \subseteq I_i$. Hence the mixed multiplicity cannot decrease whenever we replace $I_i$ with $m\tilde{I_i}$. We will use this to estimate the terms in the summation $\sum e(I_i, I_j, I_k)$. 

First, we replace $I_1$, $I_2$ with $m\tilde{I_1}$, $m\tilde{I_2}$. Then we apply \ref{split-mixed-mult} and \ref{mod-gen-elem} to the mixed multiplicities. 
\begin{align*}
 \sum_{1 \leq i < j < k \leq 4} e(I_i, &I_j, I_k) 
    \leq e(m\tilde{I_1},m\tilde{I_2},I_3) +  e(m\tilde{I_1},m\tilde{I_2},I_4) +  e(m\tilde{I_1},I_3,I_4) + e(m\tilde{I_2},I_3,I_4) \\
    &\leq e(\tilde{I_1},\tilde{I_2},I_3) + e(\tilde{I_1},\tilde{I_2},I_4)+ e(\tilde{I_1},I_3,I_4)+ e(\tilde{I_2},I_3,I_4)  \\
    & \quad + e(\tilde{I_1}',I_3') + e(\tilde{I_1}',I_4') + e(\tilde{I_2}',I_3') + e(\tilde{I_2}',I_4') + 2e(I_3',I_4') +  e(I_3'')+e(I_4'').
\end{align*}
Now we replace $I_3$, $I_4$ with $m\tilde{I_3}$, $m\tilde{I_4}$ and apply the two results again to obtain the following. To simplify the calculations, we have not changed the last line and will rearrange terms later.
\begin{align*}
\sum_{1 \leq i < j < k \leq 4} e(I_i, &I_j, I_k) \leq e(\tilde{I_1},\tilde{I_2},m\tilde{I_3}) + e(\tilde{I_1},\tilde{I_2},m\tilde{I_4})+ e(\tilde{I_1},m\tilde{I_3},m\tilde{I_4})+ e(\tilde{I_2},m\tilde{I_3},m\tilde{I_4}) \\
    & \quad + e(\tilde{I_1}',I_3') + e(\tilde{I_1}',I_4') + e(\tilde{I_2}',I_3') + e(\tilde{I_2}',I_4') + 2e(I_3',I_4') +  e(I_3'')+e(I_4'') \\
    &\leq e(\tilde{I_1},\tilde{I_2},\tilde{I_3}) + e(\tilde{I_1},\tilde{I_2},\tilde{I_4}) + e(\tilde{I_1},\tilde{I_3},\tilde{I_4}) + e(\tilde{I_2},\tilde{I_3},\tilde{I_4}) + 2e(\tilde{I_1}',\tilde{I_2}')\\ 
    &\quad + e(\tilde{I_1}',\tilde{I_3}') + e(\tilde{I_1}',\tilde{I_4}') + e(\tilde{I_2}',\tilde{I_3}') + e(\tilde{I_2}',\tilde{I_4}') + e(\tilde{I_1}'') + e(\tilde{I_2}'') \\
    &\quad + e(\tilde{I_1}',I_3') + e(\tilde{I_1}',I_4') + e(\tilde{I_2}',I_3') + e(\tilde{I_2}',I_4') + 2e(I_3',I_4') +  e(I_3'')+e(I_4'').
\end{align*}
Rearranging terms based on whether the ideals are in $R$, $R'$, or $R''$, we have
\begin{align*}
    \sum_{1 \leq i < j < k \leq 4} e(I_i, &I_j, I_k) \leq \sum_{1 \leq i < j < k \leq 4} e(\tilde{I_i},\tilde{I_j},\tilde{I_k}) \\ &\quad + 2e(\tilde{I_1}',\tilde{I_2}') + e(\tilde{I_1}',\tilde{I_3}') + e(\tilde{I_1}',\tilde{I_4}') + e(\tilde{I_2}',\tilde{I_3}')  + e(\tilde{I_2}',\tilde{I_4}') \\ 
    &\quad   + e(\tilde{I_1}',I_3') + e(\tilde{I_1}',I_4') + e(\tilde{I_2}',I_3') + e(\tilde{I_2}',I_4') + 2e(I_3',I_4') \\
    &\quad  + e(\tilde{I_1}'') + e(\tilde{I_2}'') +  e(I_3'')+e(I_4'').
\end{align*}
We will be able to use induction on $\sum_{i=1}^4 \lambda(R/I_i)$ after estimating the term $2e(I_3',I_4')$ using the same technique of applying \ref{split-mixed-mult} and \ref{mod-gen-elem}.
\begin{align*}
    2e(I_3',I_4') &= e(I_3', I_4')+e(I_3',I_4') \\
    &\leq e(m'\tilde{I_3}', I_4') + e(I_3', m'\tilde{I_4}') \\ 
    &\leq e(\tilde{I_3}',I_4') + e(I_4'') + e(I_3',\tilde{I_4}') + e(I_3'')  
\end{align*}    
Once more replacing $I_3'$, $I_4'$ with $m'\tilde{I_3}'$, $m'\tilde{I_4}'$, we have
\begin{align*}    
   2e(I_3',I_4') &\leq e(\tilde{I_3}',m'\tilde{I_4}') + e(I_4'') + e(m'\tilde{I_3}', \tilde{I_4}') + e(I_3'') \\
    &\leq 2e(\tilde{I_3}',\tilde{I_4}') + e(\tilde{I_3}'') + e(\tilde{I_4}'') + e(I_3'') + e(I_4'').
\end{align*}
Using this inequality and again grouping terms based on whether the ideals are in $R$, $R'$, or $R''$, we have
\begin{align*}
    \sum_{1 \leq i < j < k \leq 4} e(I_i, &I_j, I_k) \leq \sum_{1 \leq i < j < k \leq 4} e(\tilde{I_i},\tilde{I_j},\tilde{I_k}) + \sum_{1 \leq i < j \leq 4} e(\tilde{I_i}', \tilde{I_j}') \\ &\quad + e(\tilde{I_1}', \tilde{I_2}')+ e(\tilde{I_3}', \tilde{I_4}') + e(\tilde{I_1}',I_3') + e(\tilde{I_1}',I_4') + e(\tilde{I_2}',I_3') + e(\tilde{I_2}',I_4') \\
    &\quad + \sum_{i=1}^4 e(\tilde{I_i}'') + 2e(I_3'')+2e(I_4'').
\end{align*}
Now, by induction on $\sum_{i=1}^4 \lambda(R/I_i)$,
\begin{align*}
    \sum_{1 \leq i < j < k \leq 4} e(I_i, &I_j, I_k) \leq  6\sum_{i=1}^4 \lambda(R/\tilde{I_i})-1+ e(\tilde{I_1}', \tilde{I_2}')+ e(\tilde{I_3}', \tilde{I_4}') + e(\tilde{I_1}',I_3') + e(\tilde{I_1}',I_4') \\
    &\quad + e(\tilde{I_2}',I_3') + e(\tilde{I_2}',I_4') + 2e(I_3'')+2e(I_4'').
\end{align*}
Because $I_i \subseteq \tilde{I_i}$, we have $e(\tilde{I_i}',I_j')\leq e(I_i',I_j')$ and $e(\tilde{I_i}',\tilde{I_j}')\leq e(I_i',I_j')$. This gives
\begin{align*}
    \sum_{1 \leq i < j < k \leq 4} e(I_i, I_j, I_k) &\leq  6\sum_{i=1}^4 \lambda(R/\tilde{I_i})-1+ \sum_{1\leq i <j \leq 4} e(I_i', I_j')+ 2e(I_3'')+2e(I_4'') \\
    &\leq 6\sum_{i=1}^4 \lambda(R/\tilde{I_i})-1+ \sum_{1\leq i <j \leq 4} e(I_i', I_j')+ 2\sum_{i=1}^4 e(I_i'').
\end{align*}
Finally we have
\begin{align*}
\sum_{1 \leq i < j < k \leq 4} e(I_i, I_j, I_k)\; + &\sum_{1 \leq i < j \leq 4} e(I_i', I_j') + \sum_{i=1}^4 e(I_i'')+1\\
&\leq 6\sum_{i=1}^4 \lambda(R/\tilde{I_i}) + 2\sum_{1 \leq i < j \leq 4} e(I_i', I_j') + 3\sum_{i=1}^{4} e(I_i'').
\end{align*}
The result follows by first applying \ref{dim2-ineq} with $r=4$ to the last two summations, and then applying $\lambda(R'/I_i')+\lambda(R/\tilde{I_i})=\lambda(R/I_i)$ \cite[{Lemma }2.6]{HSV-Lech}.\\

(2) For the second case, assume that at least one $I_i$ is $m$. We choose $I_4 = m$. Then by \ref{mod-gen-elem}, we need to show

$$e(I_1,I_2,I_3) + 2\sum_{1\leq i < j \leq 3}e(I_i',I_j') + 2\sum_{i=1}^{3}e(I_i'') +2 \leq 6\sum_{i=1}^{3}\lambda(R/I_i) + 6.$$

Combining \cite[p. 365]{SwansonHuneke} and \cite[{Corollary }4.5]{HSV-Lech} gives $$2\sum_{1\leq i < j \leq 3}e(I_i',I_j') + 2\sum_{i=1}^{3}e(I_i'') \leq 4\sum_{i =1}^{3} \lambda(R/I_i).$$

So it is enough to show $e(I_1,I_2,I_3)+2 \leq 2\sum_{i=1}^{3}\lambda(R/I_i)+6$, which follows from \ref{lech-mixed}.
\end{proof}

The next result is a generalization of \cite[{Theorem }6.1]{HSV-Lech} for mixed multiplicities of ideals in a polynomial ring.
\begin{Th}\label{dim4-ineq}
Let $R=k[x_1,...,x_d]$ with $d \geq 4$. For $m$-primary ideals $I_1, ..., I_d$,  $$e(mI_1,...,mI_d) < (d-1)! \sum_{i=1}^d \lambda(R/I_i).$$
\end{Th}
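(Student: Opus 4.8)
The plan is to prove the inequality by a double induction: an outer induction on $d$ with base case $d = 4$ supplied by \ref{dim3-ineq}, and, for each fixed $d$, an inner induction on $\sum_{i=1}^d \lambda(R/I_i)$ with base case $I_1 = \cdots = I_d = m$. As in the earlier results, I would first reduce to the case where every $I_i$ is integrally closed, so that the containment $m\tilde I_i \subseteq I_i$ from \cite[{Theorem }2.3]{HSV-Lech} is available (where $\tilde I_i = I_i : x$ for a general linear form $x$). In the base case of the inner induction, $e(mI_1,\dots,mI_d) = e(m^2,\dots,m^2) = 2^d$, while the right-hand side is $(d-1)!\,d = d!$, so the strict inequality holds precisely because $2^d < d!$ for $d \ge 4$.

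The engine of the proof is an expansion of $e(mI_1,\dots,mI_d)$. Fixing a complete reduction of $(m,I_1,\dots,I_d)$ to obtain compatible general linear forms $x = x_1, x_2, \dots$ (using \ref{general-implies-joint} to guarantee that any subcollection of them is a joint reduction of the relevant ideals), I would repeatedly apply \ref{split-mixed-mult} to break each factor $mI_i$ into its $m$-part and $I_i$-part, and then apply \ref{mod-gen-elem} to absorb each surviving copy of $m$ by passing to a quotient ring. The outcome is an identity
$$e(mI_1,\dots,mI_d) = e(I_1,\dots,I_d) + \Sigma',$$
where $\Sigma'$ is exactly the ``expanded sum'' of the images $I_1',\dots,I_d'$ in $R' = R/(x)$ of dimension $d-1$, i.e.\ the same shape of expression appearing on the left-hand side of \ref{dim3-ineq}. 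The strategy is then to bound the two pieces separately and recombine them through the length identity $\lambda(R'/I_i') + \lambda(R/\tilde I_i) = \lambda(R/I_i)$ of \cite[{Lemma }2.6]{HSV-Lech}.

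For $\Sigma'$, when $d = 4$ it is literally the left-hand side of \ref{dim3-ineq} applied in $R'$, so $\Sigma' \le (d-1)!\sum_i \lambda(R'/I_i')$. For $d \ge 5$ I would dominate $\Sigma'$ by $\sum_{j=1}^d e\big(mI_1',\dots,\widehat{mI_j'},\dots,mI_d'\big)$: each term of $\Sigma'$ indexed by $S$ carries weight one, whereas in the latter sum it occurs once for every $j \notin S$, hence with weight $d - |S| \ge 1$. Applying the outer inductive hypothesis (this very theorem in dimension $d-1 \ge 4$) to each of the $d$ summands and adding yields $\Sigma' < (d-2)!\,(d-1)\sum_i \lambda(R'/I_i') = (d-1)!\sum_i \lambda(R'/I_i')$. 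For the top term, in the main case where no $I_i$ equals $m$, the containment $m\tilde I_i \subseteq I_i$ gives $e(I_1,\dots,I_d) \le e(m\tilde I_1,\dots,m\tilde I_d)$, and since $\tilde I_i \supsetneq I_i$ forces $\sum_i \lambda(R/\tilde I_i) < \sum_i \lambda(R/I_i)$, the inner inductive hypothesis gives $e(m\tilde I_1,\dots,m\tilde I_d) < (d-1)!\sum_i \lambda(R/\tilde I_i)$. Adding the two bounds and invoking the length identity produces exactly $(d-1)!\sum_i \lambda(R/I_i)$, strictly.

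The step I expect to be the main obstacle is the complementary case where some $I_i$ equals $m$, since then $\tilde I_i = R$ and the $\tilde I$-replacement on the top term neither decreases $\sum_i \lambda(R/I_i)$ nor stays within $m$-primary tuples. Here I would instead exploit the $m$-factor directly: a slot equal to $m$ may be used with \ref{mod-gen-elem} to drop the dimension, reducing $e(mI_1,\dots,mI_d)$ to $2^t$ times a mixed multiplicity of the surviving ideals in dimension $d - t$, where $t$ is the number of indices with $I_i = m$. When $d - t \ge 4$ this closes by the outer induction; but when the reduction lands in dimension $2$ or $3$ the improved bound fails, so one must instead bound the expanded sum termwise, using \ref{lech-mixed} for the top mixed multiplicity together with \ref{dim2-ineq} (and, for $d = 4$, the computation inside case (2) of \ref{dim3-ineq}), tracking constants carefully to preserve strictness. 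Verifying that the factors $2^t (d-t-1)!$ never exceed $(d-1)!$ and that the extra contributions $(d-1)!\,\lambda(R/m)$ coming from the $m$-slots are always absorbed is the delicate bookkeeping at the heart of the argument.
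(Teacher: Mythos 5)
Your skeleton (double induction, reduction to integrally closed ideals, the colon trick $\tilde I_i = I_i : x$, expansion via complete reductions, and Proposition \ref{dim3-ineq} as the dimension-three input) matches the paper's, and your inductive step for $d \ge 5$ is a correct variant that genuinely differs from the paper's: you expand all $d$ factors symmetrically and dominate the lower-order sum $\Sigma'$ by $\sum_j e(mI_1',\dots,\widehat{mI_j'},\dots,mI_d')$, whereas the paper splits only the single factor $mI_1$, with $I_1 \ne m$ chosen so that $\lambda(R'/I_1')$ is maximal, writing $e(mI_1,\dots,mI_d) \le e(m\tilde I_1, mI_2,\dots,mI_d) + e(m'I_2',\dots,m'I_d')$ and absorbing $(d-2)!\sum_{i\ge 2}\lambda(R'/I_i') \le (d-1)!\,\lambda(R'/I_1')$. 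Your counting argument for $\Sigma'$ is valid and avoids the max-colength trick, at the price of having to treat the case where some $I_i = m$ separately; for $d\ge 5$ that case is indeed immediate, since one $m$-slot drops the dimension to $d-1\ge 4$ and $2(d-2)!\le (d-1)!$.

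The genuine gap is in the base case $d=4$ with at least one $I_i = m$, which is where the paper spends most of its effort (cases (2)--(4) of its proof of \ref{dim4-ineq}). The tools you name there do not produce sharp enough constants. Concretely, take $d=4$ and $I_4=m$: your dimension-drop gives $e(mI_1,mI_2,mI_3,m^2)=2e(\bar m\bar I_1,\bar m\bar I_2,\bar m\bar I_3)$ in dimension $3$, and bounding the eight terms of the expansion of the latter by \ref{lech-mixed} and \ref{dim2-ineq} yields at best $2\bigl(4\sum_{i\le 3}\lambda(R/I_i)+1\bigr)=8\sum\lambda+2$, whereas the target is $6\sum_{i\le 3}\lambda(R/I_i)+6$; since $\sum\lambda\ge 3$, this never closes. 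Keeping the four-dimensional expansion and applying \ref{lech-mixed} to the top term $e(I_1,I_2,I_3,m)$ fails for the same reason. The missing idea is the paper's: in these subcases one must still run the colon trick on the top term, bounding $e(I_1,\dots,I_4)\le e(m\tilde I_1,\dots,m\tilde I_j,m,\dots,m)$ (replacing only the ideals different from $m$), re-expanding that product, and applying \ref{dim3-ineq} to the tuple $(\tilde I_1',\dots,\tilde I_j',mR',\dots)$ padded with copies of $mR'$ (or direct dimension-two and dimension-one estimates when two or three slots equal $m$) to obtain the bound $6\sum\lambda(R/\tilde I_i)$; only then does the length identity $\lambda(R'/I_i')+\lambda(R/\tilde I_i)=\lambda(R/I_i)$ recombine with the $\Sigma'$ bound to give $6\sum\lambda(R/I_i)$. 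Your proposal defers this as ``delicate bookkeeping,'' but it is not mere bookkeeping with the ingredients you list; it requires this additional device.
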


\begin{proof}
We may assume that each $I_i$ is integrally closed. We proceed by induction on the dimension. The base case of $d=4$ will be handled last. Now assume $d>4$. We will induct on $\sum_{i=1}^d \lambda(R/I_i)$. The base case is where $\sum_{i=1}^d \lambda(R/I_i)=d$. Then every $I_i=m$, so $e(mI_1,...,mI_d)=e(m^2)=2^d < d!$ since $d > 4$.

For the induction, take $x \in m$ to be a general linear form not contained in any $I_i \neq m$. As in the beginning of the proof of \ref{lech-mixed}, we may assume that $I_1$ is not $m$ and that $\lambda(R'/I_1') = \textrm{max}\{\lambda(R'/I_i')\ : 1\leq i \leq d \}$. Define $\tilde{I_1}=I_1:x$.

Applying \ref{split-mixed-mult} and \ref{mod-gen-elem},
\begin{align*}
e(mI_1,...,mI_d) &=e(I_1,mI_2,...,mI_d)+e(m, mI_2,...,mI_d) \\
    &\leq e(m\tilde{I_1},mI_2,...,mI_d)+e(m'I_2',...,m'I_d').
\end{align*}
By induction on $\sum_{i=1}^d \lambda(R/I_i)$,
 $$e(m\tilde{I_1},mI_2,...,mI_d) < (d-1)!\sum_{i=2}^d \lambda(R/I_i)     +(d-1)!\lambda(R/\tilde{I_1}). $$
By induction on $d$ and $\lambda(R'/I_1') = \textrm{max}\{\lambda(R'/I_i')\ : 1\leq i \leq d \}$,
 \begin{align*}
     e(m'I_2',...,m'I_d') &< (d-2)!\sum_{i=2}^d \lambda(R'/I_i') \leq (d-1)! \lambda(R'/I_1').
 \end{align*}
Because $\lambda(R'/I_1')+\lambda(R/\tilde{I_1})=\lambda(R/I_1)$ \cite[{Lemma 2.6}]{HSV-Lech}, we have
\begin{align*}
     e(mI_1,...,mI_d) &< (d-1)!\sum_{i=2}^d \lambda(R/I_i)     +(d-1)!\lambda(R/\tilde{I_1})+ (d-1)! \lambda(R'/I_1') \\
     &= (d-1)! \sum_{i=1}^d \lambda(R/I_i).
\end{align*}

It remains to show the case for $d=4$. We will use complete reductions to reduce to the result in dimension 3. We induct on $\sum_{i=1}^4 \lambda(R/I_i)$. First, if $\sum_{i=1}^4 \lambda(R/I_i)=4$, then each $I_i=m$, so the result holds. Now, let $\{x_{ij}\}$ be a complete reduction of $(m,m,m,I_1,I_2,I_3,I_4)$ where $x_{11}, x_{22}$, and $x_{33}$ are general linear forms. Furthermore, we may choose $x_{11}$ not contained in any $I_i \neq m$. Let $-'$, $-''$, and $-'''$ denote images in $R' = R/(x_{11}), R'' = R/(x_{11},x_{22}),$ and $R''' = R/(x_{11},x_{22},x_{33})$, respectively. Again, we may assume that $I_1$ is not $m$ and that $I_1'$ has maximum colength in $R'$. Using \ref{split-mixed-mult} and \ref{mod-gen-elem} we expand the mixed multiplicity as 
\begin{align*} e(mI_1,mI_2,mI_3,mI_4)&=e(I_1,I_2,I_3,I_4) + \sum_{1 \leq i< j < k \leq 4 } e(I_i', I_j', I_k')  \\ & + \sum_{1 \leq i < j \leq 4} e(I_i'', I_j'') \;\;\;\;\;+ \sum_{i=1}^{4} e(I_i''')+1.
\end{align*} 

Set $\tilde{I_i}=I_i:x_{11}$. We have four cases to consider based on whether zero, one, two, or three $I_i$ are $m$. \\

(1) Suppose $I_i \neq m$ for all $i$. Then $\tilde{I_i} \neq R$ for all $i$. Using the induction hypothesis, we have
\begin{align*}
    e(mI_1,&mI_2,mI_3,mI_4)  \\
    &\leq e(m\tilde{I_1},m\tilde{I_2},m\tilde{I_3},m\tilde{I_4}) + \sum_{1 \leq i< j < k \leq 4} e(I_i', I_j', I_k') + \sum_{1 \leq i < j \leq 4} e(I_i'', I_j'') + \sum_{i=1}^{4} e(I_i''')+1 \\
    &< 6\sum_{i=1}^4 \lambda(R/\tilde{I_i}) + \sum_{1 \leq i< j < k \leq 4} e(I_i', I_j', I_k') + \sum_{1 \leq i < j \leq 4} e(I_i'', I_j'') + \sum_{i=1}^{4} e(I_i''')+1.
\end{align*}
We have the result, after applying \cite[{Lemma }2.6]{HSV-Lech}, if the following holds $$\sum_{1 \leq i< j < k \leq 4} e(I_i', I_j', I_k') + \sum_{1 \leq i < j \leq 4} e(I_i'', I_j'') + \sum_{i=1}^{4} e(I_i''')+1 \leq 6 \sum_{i=1}^4 \lambda(R'/I_i').$$ 
This is the result of \ref{dim3-ineq} and completes this case. \\

(2) Suppose $I_i=m$ for exactly one $i$. We may assume $I_4=m$, so then $\tilde{I_4}=R$. Then we have
\begin{align*} e(mI_1,&mI_2,mI_3,mI_4) \\ 
&\leq e(m\tilde{I_1},m\tilde{I_2},m\tilde{I_3},m) + \sum_{1 \leq i< j < k \leq 4} e(I_i', I_j', I_k') + \sum_{1 \leq i < j \leq 4} e(I_i'', I_j'') + \sum_{i=1}^{4} e(I_i''')+1.
\end{align*}
As in the first case, by using \cite[{Lemma }2.6]{HSV-Lech}, it suffices to show
$$e(m\tilde{I_1}, m\tilde{I_2},m\tilde{I_3}, m) < 6 \sum_{i=1}^4 \lambda(R/\tilde{I_i}).$$
Now, by \ref{split-mixed-mult} and \ref{mod-gen-elem}, we have 
$$e(m\tilde{I_1}, m\tilde{I_2},m\tilde{I_3}, m)= e(\tilde{I_1}', \tilde{I_2}', \tilde{I_3}')+ \sum_{1 \leq i < j \leq 3} e(\tilde{I_i}'', \tilde{I_j}'') + \sum_{i=1}^3 e(\tilde{I_i}''') +1.$$
Define $J_i'=\tilde{I_i}'$ for $i=1,2,3$. Define $J_4'=mR'$. Then, by \ref{dim3-ineq}, we have 
$$\sum_{1 \leq i<j<k \leq 4} e(J_i', J_j', J_k') + \sum_{1 \leq i<j \leq 4} e(J_i'', J_j'') + \sum_{i=1}^4 e(J_i''') + 1 \leq 6 \sum_{i=1}^4 \lambda(R'/J_i').$$
Now, because $J_4'=mR'$, we have the following two inequalities 
$$6\sum_{i=1}^4 \lambda(R'/J_i')\leq 6+6\sum_{i=1}^3 \lambda(R/\tilde{I_i}),$$
\begin{align*} 
e(\tilde{I_1}', \tilde{I_2}', \tilde{I_3}')+ \sum_{1 \leq i < j \leq 3}& e(\tilde{I_i}'', \tilde{I_j}'') + \sum_{i=1}^3 e(\tilde{I_i}''') +8  \\
&\leq \sum_{1 \leq i<j<k \leq 4} e(J_i', J_j', J_k') + \sum_{1 \leq i<j \leq 4} e(J_i'', J_j'') + \sum_{i=1}^4 e(J_i''') + 1.
\end{align*}
Hence, $$e(\tilde{I_1}', \tilde{I_2}', \tilde{I_3}')+ \sum_{1 \leq i < j \leq 3} e(\tilde{I_i}'', \tilde{I_j}'') + \sum_{i=1}^3 e(\tilde{I_i}''') +1<6 \sum_{i=1}^4 \lambda(R/\tilde{I_i}).$$

(3) Suppose $I_i=m$ for exactly two values of $i$. We may assume $I_3=I_4=m$. As previously, it suffices to show $$e(m\tilde{I_1}, m\tilde{I_2},m,m) < 6 \sum_{i=1}^4 \lambda(R/\tilde{I_i}).$$
By \ref{split-mixed-mult} and \ref{mod-gen-elem}, we have
$$e(m\tilde{I_1},m\tilde{I_2},m,m)=e(\tilde{I_1}'',\tilde{I_2}'')+ e(\tilde{I_1}''')+e(\tilde{I_2}''')+1.$$
As in the proof of \ref{dim2-ineq}, using \cite[p. 365]{SwansonHuneke} and \cite[{Corollary }4.5]{HSV-Lech}, we have 
$$e(\tilde{I_1}'',\tilde{I_2}'')+ e(\tilde{I_1}''')+e(\tilde{I_2}''')+1 \leq 2\sum_{i=1}^2 \lambda(R/\tilde{I_i}) +1.$$
Hence, 
$$e(m\tilde{I_1}, m\tilde{I_2},m,m) < 6 \sum_{i=1}^4 \lambda(R/\tilde{I_i}).$$

(4) Last, suppose $I_1 \subsetneq m$ and $I_i=m$ for $i=2,3,4$. Once more, it suffices to show $$e(m\tilde{I_1},m,m,m) < 6\sum_{i=1}^4 \lambda(R/\tilde{I_i}).$$
Now, $e(m\tilde{I_1},m,m,m)=1+e(\tilde{I_1},m,m,m)=1+e(\tilde{I_1}''')$ and $e(\tilde{I_1}''') \leq \lambda(R'''/\tilde{I_1}''')$. Hence, $e(m\tilde{I_1},m,m,m) \leq 2 \sum_{i=1}^4 \lambda(R/\tilde{I_i})$, which gives the desired result.
\end{proof} 
$\newline$
\section{Proof of the Main Theorems} \label{sec-main-theorems}

\begin{Th}  Let $(R,m)$ be a Noetherian local ring with dim $R = d \geq 4$, and $E \subseteq F = R^r$ a submodule of a finite-rank free module with $\lambda(F/E) < \infty$ and $E \subseteq mF$. Then $$br(mE) < \frac{(d+r-1)!}{r!}\lambda(F/E)e(R).$$
\end{Th}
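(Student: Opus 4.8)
The plan is to combine the reduction machinery of Section \ref{sec-reduction-steps} with the polynomial-ring mixed-multiplicity bound of Section \ref{sec-polyring}, glued together by the formula that expresses the Buchsbaum--Rim multiplicity of a direct sum of ideals as a sum of mixed multiplicities. First I would invoke Theorem \ref{reduction-step}: it says that the desired inequality for an arbitrary Noetherian local ring follows once we establish, for every polynomial ring $S = k[x_1,\dots,x_d]$ over an infinite field with $n = (x_1,\dots,x_d)$ and every $\mathcal{E} \subseteq \mathcal{F} = S^r$ of finite colength, the bound
$$br_S(n\mathcal{E}) < \frac{(d+r-1)!}{r!}\lambda(\mathcal{F}/\mathcal{E}).$$
The hypothesis $E \subseteq mF$ is carried through the reductions of Section \ref{sec-reduction-steps} and becomes $\mathcal{E} \subseteq n\mathcal{F}$; this is exactly what will force the final bound to be strict. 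So everything reduces to the polynomial-ring statement.

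Next I would pass to the initial module. Fixing a monomial order on the free module $\mathcal{F} = S^r$, the initial module $\mathcal{E}^* = \operatorname{in}(\mathcal{E})$ is a direct sum of monomial ideals $\mathcal{E}^* = J_1 \oplus \cdots \oplus J_r$, and Gr\"obner degeneration preserves colengths in each symmetric power, so $\lambda(\mathcal{F}/\mathcal{E}) = \lambda(\mathcal{F}/\mathcal{E}^*) = \sum_{i=1}^r \lambda(S/J_i)$. For the multiplicity I would use, for any homogeneous $\mathcal{G}$, the containment $(\operatorname{in}\mathcal{G})^j \subseteq \operatorname{in}(\mathcal{G}^j)$ together with colength preservation to get $br_S(\mathcal{G}) \leq br_S(\operatorname{in}\mathcal{G})$, and then the containment $n\operatorname{in}(\mathcal{E}) \subseteq \operatorname{in}(n\mathcal{E})$ with the monotonicity of $br$ under inclusion, giving
$$br_S(n\mathcal{E}) \leq br_S(\operatorname{in}(n\mathcal{E})) \leq br_S(n\mathcal{E}^*).$$
Since $\mathcal{E} \subseteq n\mathcal{F}$ forces each $J_i \subseteq n$, the $J_i$ are proper $n$-primary monomial ideals, so it suffices to bound $br_S(n\mathcal{E}^*)$ for $\mathcal{E}^* = J_1 \oplus \cdots \oplus J_r$ a direct sum of $n$-primary ideals.

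For this direct sum I would apply the formula \cite[Theorem 4.9]{br-mixed-mult}, which yields
$$br_S(n\mathcal{E}^*) = br_S(nJ_1 \oplus \cdots \oplus nJ_r) = \sum_{a_1+\cdots+a_r=d} e\big((nJ_1)^{[a_1]},\dots,(nJ_r)^{[a_r]}\big).$$
Each summand is the mixed multiplicity of the list $nJ_1,\dots,nJ_r$ with $J_i$ occurring $a_i$ times, so Theorem \ref{dim4-ineq} (applicable since $d \geq 4$) bounds it strictly by $(d-1)!\sum_{i=1}^r a_i\,\lambda(S/J_i)$. Summing over all compositions and using the symmetry $\sum_{a_1+\cdots+a_r=d} a_i = \tfrac{d}{r}\binom{d+r-1}{r-1}$, the constant collapses to exactly $\tfrac{(d+r-1)!}{r!}$, giving
$$br_S(n\mathcal{E}) \leq br_S(n\mathcal{E}^*) < \frac{(d+r-1)!}{r!}\sum_{i=1}^r \lambda(S/J_i) = \frac{(d+r-1)!}{r!}\lambda(\mathcal{F}/\mathcal{E}).$$

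The step I expect to be the main obstacle is the passage to the initial module: one must set up a monomial order on $S^r$ and on $\operatorname{Sym}(\mathcal{F}) = S[T_1,\dots,T_r]$ compatibly, and then carefully verify both that Gr\"obner degeneration does not decrease the Buchsbaum--Rim multiplicity and that $n\operatorname{in}(\mathcal{E}) \subseteq \operatorname{in}(n\mathcal{E})$, so that the chain of inequalities above is valid. Once these are in place, the remaining combinatorial bookkeeping is routine, and the strictness is automatic from Theorem \ref{dim4-ineq} precisely because the hypothesis $E \subseteq mF$ guarantees every $J_i$ is a proper $n$-primary ideal.
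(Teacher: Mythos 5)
Your proposal is correct and follows essentially the same route as the paper: reduce to a polynomial ring via Theorem \ref{reduction-step}, pass to the initial module to replace $\mathcal{E}$ by a direct sum of $n$-primary ideals, convert $br$ to a sum of mixed multiplicities via \cite[Theorem 4.9]{br-mixed-mult}, bound each term with Theorem \ref{dim4-ineq}, and collapse the combinatorial constant to $\tfrac{(d+r-1)!}{r!}$. Your explicit tracking of the containment $n\operatorname{in}(\mathcal{E}) \subseteq \operatorname{in}(n\mathcal{E})$ and of the hypothesis $E \subseteq mF$ through the reductions is slightly more careful than the paper's terse "replace $E$ with $\operatorname{in}(E)$," but the argument is the same.
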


\begin{proof} By \ref{reduction-step} we reduce to the case where $R$ is a polynomial ring over an infinite field. Now fix a monomial order on $F$. Under this ordering  consider the initial module of $E$, denoted $in(E)$. From a direct generalization of \cite[{Theorem }15.3]{eisenbud-book}, we have $\lambda(F/E)=\lambda(F/in(E))$ and $\lambda(F^n/E^n)=\lambda(F^n/in(E^n))$. Further, $\lambda(F^n/in(E^n)) \leq \lambda(F^n/in(E)^n)$ since $in(E)^n \subseteq in(E^n)$. Hence $br(E) \leq br(in(E))$ and we can replace $E$ with $in(E)$ to assume that $E = \oplus_{i=1}^{r}I_i$ is a direct sum of $m$-primary ideals. We now apply \cite[Theorem 4.9]{br-mixed-mult} to express $br(mE)$ as a sum of mixed multiplicities, and then bound the mixed multiplicities with \ref{dim4-ineq}. 

$$br(mE) = \sum_{\substack{a_1 + ... + a_r = d \\ a_1,...,a_r \geq 0}}e(mI_1^{[a_1]}, ..., mI_r^{[a_r]}) < (d-1)!\sum_{\substack{a_1 + ... + a_r = d \\ a_1,...,a_r \geq 0}} \bigg(\sum_{i=1}^{r}a_i\lambda(R/I_i) \bigg).$$

Writing the colengths as a vector $\lambda = \langle \lambda(R/I_1),...,\lambda(R/I_r) \rangle$, we rewrite the sum as 
$$\sum_{\substack{a_1 + ... + a_r = d \\ a_1,...,a_r \geq 0}} \bigg(\sum_{i=1}^{r}a_i\lambda(R/I_i) \bigg) = \sum_{\substack{a_1 + ... + a_r = d \\ a_1,...,a_r \geq 0}} \langle a_1,...,a_r \rangle \cdot \lambda = \lambda \cdot \sum _{\substack{a_1 + ... + a_r = d \\ a_1,...,a_r \geq 0}} \langle a_1,...,a_r \rangle.$$

Notice that each component of the sum of the vectors is the same. We will call this number $c$.
$$ \langle c,...,c \rangle=\sum _{\substack{a_1 + ... + a_r = d \\ a_1,...,a_r \geq 0}} \langle a_1,...,a_r \rangle.$$

To compute $c$, we sum the components of the vectors in the above equation.
$$rc = \sum _{\substack{a_1 + ... + a_r = d \\ a_1,...,a_r \geq 0}} a_1 + ... + a_r = \sum _{\substack{a_1 + ... + a_r = d \\ a_1,...,a_r \geq 0}} d\;.$$ 
The above sum equals $d$ times the number of ways to write $d$ as a sum of $r$ non-negative integers. The number of ways to do so is a standard calculation; it is equal to $\binom{d+r-1}{r-1}$, which is the same as the multiset number $\left({d+1\choose r-1}\right)$. Solving for $c$ we get $c = \frac{(d+r-1)!}{r!(d-1)!}$.

Putting everything together yields
\begin{align*}
   br(mE) &< (d-1)!\sum_{\substack{a_1 + ... + a_r = d \\ a_1,...,a_r \geq 0}} \bigg(\sum_{i=1}^{r}a_i\lambda(R/I_i) \bigg) \\
   &= (d-1)!\sum_{i=1}^{r}\lambda(R/I_i)c 
   = \frac{(d+r-1)!}{r!} \sum_{i=1}^r \lambda(R/I_i).
\end{align*} Finally, $\lambda(F/E)=\sum_{i=1}^r \lambda(R/I_i)$ yields the result.
\end{proof}

\begin{Th}
Let $(R,m)$ be a Noetherian local ring with dim $R = d \geq 4$, and let $I_1,..,I_d$ be $m$-primary ideals. Then

$$e(mI_1,...,mI_d) < (d-1)! \sum_{i=1}^{d}\lambda(R/I_i)e(R).$$
\end{Th}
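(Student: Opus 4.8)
The plan is to prove nothing essentially new here: the statement is the Noetherian-local shadow of the polynomial-ring inequality already established in Section \ref{sec-polyring}, and the whole point of Sections 3 and 4 is to make the passage between the two automatic. Concretely, I would deduce the theorem by chaining together Theorem \ref{reduction-step} (the global-to-polynomial descent) with Theorem \ref{dim4-ineq} (the polynomial-ring bound), so that the body of the proof is a single invocation of two results proved earlier.

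First I would invoke Theorem \ref{reduction-step}, which reduces the assertion for an arbitrary Noetherian local ring $(R,m)$ of dimension $d$ to the case of a polynomial ring $S = k[x_1,\dots,x_d]$ over an infinite field, with homogeneous maximal ideal $n = (x_1,\dots,x_d)$ and the $I_i$ replaced by $n$-primary ideals $J_i$. That is, it suffices to know the inequality $e_S(nJ_1,\dots,nJ_d) < (d-1)!\sum_{i=1}^d \lambda(S/J_i)$ for all such $J_i$; note that the factor $e(R)$ present in the general statement is absent in the polynomial-ring bound precisely because $e(S) = 1$. Second, I would observe that this polynomial-ring hypothesis is exactly the content of Theorem \ref{dim4-ineq}, available here because the theorem assumes $d \geq 4$. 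The two results then compose to give $e(mI_1,\dots,mI_d) < (d-1)!\sum_{i=1}^d \lambda(R/I_i)e(R)$ at once, completing the proof.

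Since the descent itself is already packaged, the genuine difficulty is not in this theorem but in its inputs, and I expect the main obstacle to be hidden in Theorem \ref{dim4-ineq}. On the reduction side the delicate steps, all handled upstream, are the passage to $R(X) = R[X]_{mR[X]}$ to force an infinite residue field while preserving dimension, multiplicities, and colengths, and the two-stage descent from $R$ to $G = \mathrm{gr}_m(R)$ and then to a Noether normalization $S$ of $G$, carried out in Propositions \ref{mixed-reduction-to-graded} and \ref{reduction-to-poly}. On the polynomial-ring side the real work is concentrated in the induction of Theorem \ref{dim4-ineq}, whose dimension induction bottoms out at $d = 4$ and there rests on the three-variable estimate \ref{dim3-ineq}, on Rees's complete-reduction machinery (Corollary \ref{general-implies-joint} together with Theorem \ref{mod-gen-elem} and Lemma \ref{split-mixed-mult}), and on the auxiliary Lech bound \ref{lech-mixed}. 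Granting all of these, transporting the polynomial-ring inequality back through Theorem \ref{reduction-step} yields the theorem with no further computation.
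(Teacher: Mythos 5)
Your proposal is correct and is essentially identical to the paper's own proof, which likewise consists of invoking Theorem \ref{reduction-step} to descend to the polynomial-ring case and then citing Theorem \ref{dim4-ineq}. Your added remark that the factor $e(R)$ disappears in the polynomial-ring statement because $e(S)=1$ is a correct reading of how the two results fit together.
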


\begin{proof} From \ref{reduction-step} we reduce to the case where $R$ is a polynomial ring over an infinite field. The result now follows directly from 4.4.
\end{proof}

\textbf{Acknowledgement:} The authors would like to thank Bernd Ulrich for suggesting the problem and for his insights while we were working on the problem. We are also grateful for his thorough reading of our drafts and for his various corrections.


\bibliographystyle{amsplain}
\bibliography{MultiplicityPaper}

\end{document}